\documentclass[letterpaper, 10 pt, conference]{ieeeconf}

\IEEEoverridecommandlockouts
\usepackage[utf8]{inputenc}
\usepackage{amsmath,amssymb,amsfonts,mathtools}

\usepackage{amsthm}
\usepackage{xcolor}
\usepackage{graphicx}
\usepackage{float}
\usepackage{babel}
\usepackage{algorithm}
\usepackage{algorithmic}
\usepackage{bbm}
\usepackage{booktabs}
\usepackage{caption}
\usepackage{subcaption}
\usepackage{array}
\usepackage[hidelinks]{hyperref}

\newtheorem{theorem}{Theorem}
\newtheorem{lemma}{Lemma}
\newtheorem{proposition}{Proposition}
\newtheorem{corollary}{Corollary}
\newtheorem{assumption}{Assumption}
\newtheorem{remark}{Remark}
\newtheorem{definition}{Definition}

\newcommand{\E}{\mathbb{E}}
\newcommand{\R}{\mathbb{R}}
\newcommand{\norm}[1]{\left\|#1\right\|}

\newcommand{\tr}{\mathrm{tr}}
\newcommand{\Acl}{A_{\mathrm{cl}}}

\title{Finite-Sample Closed-Loop Stability of Model Predictive Path Integral Control for Linear Time-Invariant Systems}

\author{
Hyung-Jin Yoon$^{\dagger}$ and
Hunmin Kim$^{\ddagger}$
\thanks{$^{\dagger}$H.-J. Yoon is with the
Department of Mechanical and Nuclear Engineering, Tennessee Technological
University, Cookeville, TN, USA.}%
\thanks{$^{\ddagger}$H. Kim is with the School of Engineering, Department
of Electrical and Computer Engineering, Mercer University, Macon, GA, USA.}%
\thanks{This work was supported by internal funding at Tennessee
Technological University.}
}

\begin{document}
\maketitle

% -----------------------------------------------------------------------
\begin{abstract}
We establish finite-sample closed-loop stability guarantees for Model
Predictive Path Integral (MPPI) control applied to discrete-time Linear
Time-Invariant (LTI) systems with additive Gaussian process
disturbances.  The key observation is that, for unconstrained
LTI/quadratic systems with the DARE terminal cost, the exact
finite-horizon MPC law has the same first control action as the
infinite-horizon LQR law for every planning horizon.  Thus, finite-sample
MPPI can be analyzed as a stochastic perturbation of LQR.
First, we show that the MPPI control law approximates the LQR feedback
with high probability.  The approximation error decomposes into a Monte
Carlo term that decreases with the sample count and an infinite-sample
temperature bias that persists at finite temperature but vanishes as the
temperature is reduced.  The resulting constants are written in terms of
the horizon-dependent stacked cost matrices, making explicit that the
finite-sample certificate is parametrized by the selected planning
horizon.
Second, we use a Lyapunov perturbation argument to prove practical
exponential stability in expectation.  On sample paths that remain in a
compact Lyapunov sublevel set over a finite operating horizon, the
expected state norm decays exponentially up to three residual floors: a
process-noise floor, an MPPI approximation floor, and a confidence floor
from the per-step sampling failure probability.  The sufficient sample
threshold is explicit and computable from the DARE solution, LQR
stability margin, MPPI sampling parameters, temperature, and planning
horizon.  In the joint limit of infinite samples and vanishing
temperature bias, the result recovers the stochastic LQR stability
bound.
\end{abstract}

% -------------------------------------------------------
\section{Introduction}
\label{sec:intro}
% -------------------------------------------------------

Model Predictive Path Integral (MPPI) control~\cite{williams2017mppi,williams2018information}
is a sampling-based receding-horizon method that has achieved strong empirical performance
across robotics and autonomous systems, including off-road navigation~\cite{williams2016aggressive},
legged locomotion, and aerial vehicles.
Its central appeal is that it requires no gradient of the cost or dynamics: at each time step
it draws $M$ random control perturbations, rolls them out in parallel (GPU-accelerated), and
forms an importance-weighted average that approximates the information-theoretic optimal.
This gradient-free, massively parallel structure makes MPPI uniquely attractive for nonlinear
and non-smooth problems where classical gradient-based MPC solvers struggle.

Despite a growing body of work on MPPI theory, the question of \emph{closed-loop stability}
under receding-horizon execution remains largely unresolved.
In particular, it is unclear under what conditions the state $x_k$ remains bounded and converges
when MPPI is implemented with a finite sample count $M$ and subjected to persistent process
disturbances.
This is not merely an academic concern: without stability guarantees, practitioners cannot
reason systematically about how many samples are sufficient or how performance degrades as $M$
decreases.
The difficulty is that finite-sample approximation errors arise at every control update and
interact with stochastic disturbances over the horizon, making one-step optimization guarantees
insufficient for establishing long-term closed-loop behavior.

\subsection{Prior Work and the Remaining Gap}
\label{sec:related}

\textbf{MPPI foundations.}
The original MPPI derivation~\cite{williams2017mppi} frames control as minimization of a
KL-divergence between a controlled and an uncontrolled trajectory distribution, with the
importance-weighted update arising as the solution to this information-theoretic problem.
Williams et al.~\cite{williams2018information} extend this to a full information-theoretic MPC
framework.
Wagener et al.~\cite{wagener2019online} unify sampling-based MPC methods through online learning
with Bregman divergences.

\textbf{Approximation error and optimizer convergence.}
Yoon et al.~\cite{yoon2022sampling} established open-loop $O(M^{-1/2})$ sampling-complexity
bounds for Monte Carlo estimates of the path-integral optimal control, laying the groundwork for
quantitative convergence analysis.
Yi et al.~\cite{yi2024covo} (CoVO-MPC, L4DC~2024) provide the first convergence analysis of
MPPI as an optimizer, showing that the importance-weighted update contracts toward the optimal
control sequence at a linear rate for quadratic costs and characterizing the contraction rate as
a function of the sampling covariance $\Sigma_\epsilon$ and temperature $\lambda$.
Homburger et al.~\cite{homburger2025optimality} study optimality gaps in deterministic and
stochastic MPPI.
Fazlyab et al.~\cite{fazlyab2026model} interpret MPPI as preconditioned gradient descent on a
KL-regularized free-energy objective, connecting information-theoretic control to first-order
optimization.
Collectively, these works establish increasingly strong guarantees on the optimization step
performed by MPPI, but they do not address the stability of the resulting receding-horizon
closed-loop system.

\textbf{Robust MPPI and performance bounds.}
Gandhi et al.~\cite{gandhi2021robust} propose a Robust MPPI architecture with an augmented
nominal--actual state representation and derive a bound on free-energy growth as a function of
constraint violation level, tracking controller performance, and sampling error.
This gives a performance certificate for a specific architecture but does not constitute a
Lyapunov-based closed-loop stability proof.

\textbf{Contraction theory and CLF-based MPC.}
Lohmiller and Slotine~\cite{lohmiller1998contraction} establish contraction theory as a framework
for global convergence: any two trajectories contract at a uniform exponential rate, independently
of initial conditions.
Manchester and Slotine~\cite{manchester2017control} develop Control Contraction Metrics (CCMs)
that provide constructive synthesis conditions for contracting feedback policies.
Mayne et al.~\cite{mayne2000constrained} provide the classical MPC stability framework based on
a CLF terminal cost and a terminal invariant set; we replace the terminal set requirement with a
global CLF, removing the need for a terminal constraint set altogether.

\textbf{The open problem.}
A recent survey by Honda~\cite{honda2026model} identifies closed-loop
stability of path-integral MPC as a major open problem. Optimizer
convergence and closed-loop stability address fundamentally different
questions: the former asks whether a single MPPI planning step produces
a good control update for a fixed state, whereas the latter asks whether
the state trajectory generated by repeated receding-horizon execution
remains bounded over time. This paper addresses this gap for the
LTI/quadratic setting by combining finite-sample MPPI approximation
bounds with a Lyapunov perturbation argument.

\subsection{Contributions}

This paper provides, to the best of our knowledge, the first closed-loop
stability certificate for MPPI on LTI systems.  Specifically:
\begin{enumerate}
  \item \textbf{Finite-sample approximation bound (Lemma~\ref{lem:approx}):}
    We decompose the MPPI approximation error into two components: a
    finite-sample Monte Carlo error bounded by $\varepsilon_M(\eta)=
    O(M^{-1/2})$ with probability at least $1-\eta$, and an
    infinite-sample temperature bias $b_\infty(x_k,\bar{U})$ that
    persists as $M\to\infty$.  The bias is characterized in closed form
    via the bias gain $\kappa_\lambda$ (Proposition~\ref{prop:bias})
    and vanishes as $\lambda\to0$.
    This two-component decomposition is the finite-$M$ statement that
    CoVO-MPC leaves open.

  \item \textbf{Exponential stability in expectation
    (Theorem~\ref{thm:stability}):}
    The stopped process satisfies an unconditional Lyapunov bound for
    all $k\ge0$.
    On sample paths satisfying $\tau_R>T$, which
    occur with probability at least $1-\delta$, the unstopped process
    satisfies~\eqref{eq:state_practical_bound} for all $0\le k\le T$
    and all $M\ge M^*$,

  \item \textbf{Explicit sample threshold (Corollary~\ref{cor:mstar}):}
    \[
      M^* = O\!\left(
        \frac{\|P\|^2\|B\|^2\|\Acl\|^2}{\alpha_P^2}
        \log\frac{m}{\eta}
      \right)
    \]
    in terms of the LQR stability margin $\alpha_P$, system dimensions,
    and confidence level $1-\eta$, computable from the DARE solution.

  \item \textbf{ISS interpretation (Proposition~\ref{prop:iss}):}
    The main bound is recast as a practical Input-to-State Stability
    estimate with three explicit gains connecting the result to the ISS
    framework standard in robust MPC.
\end{enumerate}

The key technical ideas are threefold.  First, finite-sample MPPI is
interpreted as a perturbation of LQR with magnitude $O(M^{-1/2})$,
decomposed into a Monte Carlo component and a temperature-bias component.
Second, a Gaussian completing-the-square argument characterizes the
infinite-sample bias in closed form via $\kappa_\lambda\to0$ as
$\lambda\to0$.  Third, high-probability invariance of the Lyapunov
sublevel set $\Omega_R$ is established via a supermartingale argument
(Lemma~\ref{lem:invariance}), resolving the circularity between the
compact-set assumption and the concentration bounds.
Embedding these ingredients within the classical MPC stability framework
of Mayne et al.~\cite{mayne2000constrained} yields explicit stability and
sample-complexity guarantees.  In the limit $M\to\infty$, the result
recovers the standard stochastic LQR stability certificate.

% ============================
% === PROBLEM FORMULATION ====
% ============================
\section{Problem Formulation}\label{sec:problem}

Consider the discrete-time LTI system
\begin{equation}\label{eq:system}
x_{k+1}=Ax_k+Bu_k+w_k,
\end{equation}
where $x_k\in\R^n$, $u_k\in\R^m$, and the process disturbance
$w_k\sim\mathcal N(0,\Sigma_w)$ is i.i.d.\ with $\Sigma_w\succ0$.

At each time step, MPPI approximately solves the finite-horizon optimal
control problem
\begin{equation}\label{eq:cost}
\begin{aligned}
J(x_k,U)
&=
\sum_{i=0}^{N-1}
\left(
x_{k+i}^\top Qx_{k+i}
+
u_{k+i}^\top Ru_{k+i}
\right) \\
&+
x_{k+N}^\top Px_{k+N},
\end{aligned}
\end{equation}
where $U=\{u_k,\ldots,u_{k+N-1}\}$, $Q\succeq0$, $R\succ0$,
and $P\succ0$ is the stabilizing solution of the DARE.
For unconstrained linear-quadratic systems, this choice embeds the
infinite-horizon cost-to-go beyond the prediction horizon, causing the
finite-horizon optimum to coincide with the LQR feedback law and
providing the Lyapunov structure used in classical MPC stability
analysis.

The MPPI control update is
\begin{equation}\label{eq:mppi}
u_i^{\mathrm{MPPI}}
=
\bar u_i
+
\frac{\sum_{j=1}^{M} w^{(j)}\epsilon_i^{(j)}}
     {\sum_{j=1}^{M} w^{(j)}},
\end{equation}
where $\epsilon_i^{(j)}\sim\mathcal N(0,\Sigma_\epsilon)$ are i.i.d.\
sampling perturbations and
\begin{equation}\label{eq:weights}
w^{(j)}
=
\exp\!\left(
-\frac{1}{\lambda}
J(x_k,\bar U+\mathcal E^{(j)})
\right),
\qquad
\lambda>0.
\end{equation}
The applied control is the first element $u_k=u_0^{\mathrm{MPPI}}$,
and the nominal sequence $\bar U$ is updated according to the standard
receding-horizon shift.

The corresponding infinite-horizon LQR problem is
\[
\min_{\{u_k\}}
\sum_{k=0}^{\infty}
\left(
x_k^\top Qx_k
+
u_k^\top Ru_k
\right),
\]
whose optimal controller is
\begin{equation}\label{eq:lqrgain}
u_k^{\mathrm{LQR}}
=
-Kx_k,
\qquad
K=(R+B^\top PB)^{-1}B^\top PA,
\end{equation}
where $P\succ0$ satisfies the DARE
\begin{equation}\label{eq:dare}
P
=
Q+A^\top PA
-
A^\top PB(R+B^\top PB)^{-1}B^\top PA.
\end{equation}
Under standard stabilizability and detectability assumptions,
$A_{\mathrm{cl}}:=A-BK$ is Schur stable and $V(x)=x^\top Px$
satisfies
\begin{equation}\label{eq:lqrdec}
V(A_{\mathrm{cl}}x)-V(x)
=
-x^\top(Q+K^\top RK)x
\le
-\alpha_P\|x\|^2,
\end{equation}
where $\alpha_P:=\lambda_{\min}(Q+K^\top RK)>0$.

Since the finite-horizon optimum coincides with the LQR solution under
the above construction, the central question becomes how closely the
finite-sample MPPI update tracks this optimum.

\begin{assumption}[Stabilizability]\label{ass:stab}
  The pair $(A,B)$ is stabilizable.
\end{assumption}

\begin{assumption}[Detectability]\label{ass:detect}
  The pair $(A,Q^{1/2})$ is detectable.
\end{assumption}

\begin{remark}
  Assumptions~\ref{ass:stab}--\ref{ass:detect} are the minimal conditions
  under which the DARE~\eqref{eq:dare} has a unique positive-definite
  solution~\cite{anderson1990optimal}.  If $Q\succ 0$, detectability
  is automatic.
\end{remark}

\begin{assumption}[DARE Terminal Cost]\label{ass:dare}
  The terminal cost $P$ in~\eqref{eq:cost} is the unique
  positive-definite solution to the DARE~\eqref{eq:dare}.
\end{assumption}

This choice is canonical: $P$ is simultaneously the LQR optimal
cost-to-go and the unique matrix satisfying the CLF decrease
condition~\eqref{eq:lqrdec}, which is the terminal cost condition
required by the Mayne et al.\ framework~\cite{mayne2000constrained}.
Crucially, it also ensures that the finite-horizon optimum of
$J(x_k,U)$ coincides exactly with the infinite-horizon LQR solution
$u_k^{\mathrm{LQR}}=-Kx_k$ for any planning horizon $N\ge1$,
eliminating any horizon-truncation contribution to the approximation
error.

\begin{assumption}[MPPI Sampling]\label{ass:mppi}
  MPPI draws $M$ i.i.d.\ perturbation sequences
  $\epsilon_i^{(j)}\sim\mathcal{N}(0,\Sigma_\epsilon)$,
  $\Sigma_\epsilon\succ 0$, independent across time steps.
\end{assumption}

\begin{assumption}[Bounded Warm-Start Map]\label{ass:compact_mppi}
The MPPI nominal sequence is updated by a measurable warm-start map
\[
  \bar{U}_{k+1}
  =
  \Pi_{\mathcal{U}_N}\!\bigl(\mathcal{S}(\bar{U}_k)\bigr),
\]
where $\mathcal{S}:\mathbb{R}^{mN}\to\mathbb{R}^{mN}$ is the
receding-horizon shift operator, $\mathcal{U}_N\subset\mathbb{R}^{mN}$
is a compact set with diameter $D_{\mathcal{U}}<\infty$, and
$\Pi_{\mathcal{U}_N}$ denotes projection onto $\mathcal{U}_N$.
The projection may be implemented by clipping, saturation, or any
bounded warm-start rule, and guarantees $\bar{U}_k\in\mathcal{U}_N$
for all $k\ge0$.
\end{assumption}

\begin{remark}
Assumption~\ref{ass:compact_mppi} serves two purposes: it guarantees
$\|\bar{U}_k\|\le D_{\mathcal{U}}$ for all $k$, which bounds the
bias coefficients $\beta_0$ in Definition~\ref{def:bias} and the
bad-event constant $C_{\mathrm{bad}}$ in~\eqref{eq:Cbad_def}; and it
provides the compact domain $\mathcal{U}_N$ over which the uniform
lower bound $\underline{Z}$ in Lemma~\ref{lem:weight_moments} is
attained.
\end{remark}

\begin{assumption}[Bounded MPPI Update]\label{ass:bounded_mppi}
The implemented MPPI control satisfies
\[
  \|u_k^{\mathrm{MPPI}}\| \le \bar{u} < \infty
\]
almost surely for some constant $\bar{u}$.
This is enforced in practice by actuator saturation, truncated Gaussian
sampling of $\epsilon_i^{(j)}$, or the projection
$\Pi_{\mathcal{U}_N}$ of Assumption~\ref{ass:compact_mppi}, all of
which bound the applied control almost surely.
\end{assumption}

\begin{remark}
Assumption~\ref{ass:bounded_mppi} is required to make the bad-event
Lyapunov bound mathematically clean.  Without it, Gaussian perturbations
are unbounded and the step
$\mathbb{E}[\tilde{V}_{k+1}\mathbf{1}_{\mathcal{G}_k^c}\mid\mathcal{F}_k]
\le C_{\mathrm{bad}}\eta$
would require a Cauchy--Schwarz argument yielding a $\sqrt{\eta}$ rather
than $\eta$ residual at the Lyapunov level.
Assumption~\ref{ass:bounded_mppi} is satisfied by every practical MPPI
implementation that clips perturbations or saturates the applied control.
\end{remark}

% =====================
% === MAIN RESULTS ====
% =====================
\section{Main Results}\label{sec:main}

This section establishes that finite-sample MPPI is a stochastic
approximation of the corresponding infinite-sample MPPI update.  The
difference between infinite-sample MPPI and LQR appears explicitly as a
bias term.  Stability follows whenever this bias and the finite-sample
error are small enough relative to the LQR Lyapunov decrease margin.

\subsection{Finite-Sample Approximation of the Infinite-Sample MPPI Update}

For a state $x_k$ and nominal control sequence
$\bar U\in\mathbb R^{mN}$, let
$\mathcal E \sim \mathcal N(0,I_N\otimes\Sigma_\epsilon)$
denote the stacked MPPI perturbation sequence and let
$\epsilon_0\in\mathbb R^m$ denote its first control block.  Define the
unnormalized MPPI weight
\[
  w(x_k,\bar U,\mathcal E)
  :=
  \exp\!\left(
    -\frac{1}{\lambda}
    J(x_k,\bar U+\mathcal E)
  \right).
\]
The infinite-sample MPPI perturbation mean is
\[
  \mu_\infty(x_k,\bar U)
  :=
  \frac{\mathbb E[w\epsilon_0]}{\mathbb E[w]},
\]
and the corresponding infinite-sample MPPI control is
$u_k^\infty := \bar u_0+\mu_\infty(x_k,\bar U)$.
The infinite-sample MPPI bias relative to LQR is
$b_\infty(x_k,\bar U) := \|u_k^\infty-u_k^{\mathrm{LQR}}\|$.

\begin{lemma}[Finite Weighted Moments in the LTI/Quadratic Case]
\label{lem:weight_moments}
Suppose Assumptions~\ref{ass:stab}--\ref{ass:dare} hold and
$\Sigma_\epsilon\succ0$.  For every compact set
$\mathcal X\subset\mathbb R^n$ and every bounded set of nominal control
sequences $\mathcal U_N\subset\mathbb R^{mN}$, there exist constants
$\underline Z>0$ and $C_\epsilon<\infty$ such that, for all
$x\in\mathcal X$ and all $\bar U\in\mathcal U_N$,
\begin{align}
  \mathbb E[w] &\ge \underline Z, \label{eq:weight_lower}\\
  \mathbb E[w^2] &\le 1, \label{eq:weight_second}\\
  \mathbb E[\|w\epsilon_0\|^2] &\le C_\epsilon. \label{eq:weighted_moment}
\end{align}
\end{lemma}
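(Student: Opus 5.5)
The plan is to use the fact that, in the LTI/quadratic setting, the cost $J(x,\bar U+\mathcal E)$ is a nonnegative quadratic function of $(x,\bar U,\mathcal E)$. The three bounds then follow from elementary properties of $w$ together with Gaussian integrals.

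\textbf{Second moment.} Since $Q\succeq0$, $R\succ0$ and $P\succ0$, every term in \eqref{eq:cost} is nonnegative. Hence $J\ge0$, so $0<w\le1$ pointwise. This gives $w^2\le1$ and therefore $\mathbb E[w^2]\le1$, which is \eqref{eq:weight_second}.

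\textbf{Weighted moment.} The same pointwise bound $w\le1$ gives $\|w\epsilon_0\|^2\le\|\epsilon_0\|^2$. Taking expectations,
\[
\mathbb E[\|w\epsilon_0\|^2]\le\mathbb E\|\epsilon_0\|^2=\tr(\Sigma_\epsilon).
\]
So \eqref{eq:weighted_moment} holds with $C_\epsilon=\tr(\Sigma_\epsilon)$, uniformly in $x$ and $\bar U$; no compactness is needed for this step.

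\textbf{Lower bound on $\mathbb E[w]$.} This is the only step that uses compactness of $\mathcal X$ and boundedness of $\mathcal U_N$.
\begin{enumerate}
\item Write the stacked prediction as $X=\mathcal A x+\mathcal B U$. Then
\[
J(x,U)=X^\top\bar Q X+U^\top\bar R U,
\]
with block-diagonal stacked weights $\bar Q=\mathrm{diag}(Q,\dots,Q,P)$ and $\bar R=I_N\otimes R$.
\item The map $(x,U)\mapsto J(x,U)$ is continuous. Hence on the compact set
\[
K:=\mathcal X\times\overline{\mathcal U_N}\times\{\|\mathcal E\|\le1\}
\]
it attains a finite maximum $J_{\max}$.
\item Restrict the expectation to the ball $\{\|\mathcal E\|\le1\}$. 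This gives
\[
\mathbb E[w]\ge e^{-J_{\max}/\lambda}\,\mathbb P(\|\mathcal E\|\le1).
\]
The probability is strictly positive because $I_N\otimes\Sigma_\epsilon\succ0$ gives a nondegenerate Gaussian with everywhere-positive density.
\item Set $\underline Z:=e^{-J_{\max}/\lambda}\,\mathbb P(\|\mathcal E\|\le1)>0$.
\end{enumerate}

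\textbf{Sharper alternative.} One can instead compute $\mathbb E[w]$ exactly by completing the square. Writing $J=\mathcal E^\top H\mathcal E+2g^\top\mathcal E+c$, the Gaussian integral gives a closed-form expression. That expression is continuous in $(x,\bar U)$ and strictly positive, so it attains a positive minimum on the compact set. The ball argument above avoids this computation and is what I would write.

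The main (mild) obstacle is ensuring uniformity over $(x,\bar U)$. This is handled by taking the closure of $\mathcal U_N$, so that the product set is compact, and using continuity of $J$. Assumptions~\ref{ass:stab}--\ref{ass:dare} enter only to guarantee that $P\succ0$ exists, so that $J$ is well defined and nonnegative.
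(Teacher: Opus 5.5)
Your proof is correct. The bounds \eqref{eq:weight_second} and \eqref{eq:weighted_moment} are argued exactly as in the paper: $J\ge0$ gives $0<w\le1$, hence $\mathbb E[w^2]\le1$ and $C_\epsilon=\tr(\Sigma_\epsilon)$.

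Where you differ is the lower bound \eqref{eq:weight_lower}. The paper asserts that $(x,\bar U)\mapsto\mathbb E[w]$ is continuous (implicitly by dominated convergence, since $J$ is polynomial and $w\le1$) and strictly positive. It then applies the extreme value theorem on $\mathcal X\times\overline{\mathcal U_N}$. That argument is short but only shows that $\underline Z$ exists.

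You instead bound the integrand from below on the fixed ball $\{\|\mathcal E\|\le1\}$. You use the maximum of the continuous map $(x,\bar U,\mathcal E)\mapsto J(x,\bar U+\mathcal E)$ over the compact triple product. This avoids the continuity-of-the-integral step entirely. It also gives an explicit constant, $\underline Z=e^{-J_{\max}/\lambda}\,\mathbb P(\|\mathcal E\|\le1)$. That matters here, because $\underline Z$ enters the sample requirement $M\ge(2/\underline Z^2)\log(4/\eta)$ in the proof of Lemma~\ref{lem:approx_infinite}, and the paper emphasizes computable thresholds.

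The price is looseness. $J_{\max}$ takes the worst-case cost over the whole perturbation ball, so your $\underline Z$ can be exponentially smaller in $1/\lambda$ than the true minimum of $\mathbb E[w]$. This bites in the small-$\lambda$ regime that Proposition~\ref{prop:bias} needs for the small-gain condition. The completion-of-squares alternative you mention, which is the same Gaussian computation as in Proposition~\ref{prop:bias}, gives $\mathbb E[w]$ in closed form and hence the sharpest constant.
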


\begin{proof}
In the LTI/quadratic setting, the finite-horizon cost has the quadratic
form
\[
  J(x,U)
  =
  U^\top H U
  +
  2x^\top F U
  +
  x^\top Gx,
\]
where $H\succ0$, $F$ and $G$ are constant matrices determined by
$(A,B,Q,R,P,N)$, and $G\succeq0$.  Substituting $U=\bar U+\mathcal E$
gives
\[
  J(x,\bar U+\mathcal E)
  =
  \mathcal E^\top H\mathcal E
  +
  2(H\bar U+F^\top x)^\top\mathcal E
  +
  J(x,\bar U).
\]
Since stage and terminal costs are nonneg\-ative, $J(x,U)\ge0$, hence
$0<w\le1$.
Therefore $\mathbb E[w^2]\le1$ and
$\mathbb E[\|w\epsilon_0\|^2]\le\mathbb E[\|\epsilon_0\|^2]
=\tr(\Sigma_\epsilon)<\infty$,
giving \eqref{eq:weight_second}--\eqref{eq:weighted_moment} with
$C_\epsilon=\tr(\Sigma_\epsilon)$.

For \eqref{eq:weight_lower}, the map
$(x,\bar U)\mapsto\mathbb E[\exp(-J(x,\bar U+\mathcal E)/\lambda)]$
is continuous (since $J$ is polynomial and $w$ is bounded by $1$) and
strictly positive.  It therefore attains a positive minimum
$\underline Z>0$ on the compact set $\mathcal X\times\overline{\mathcal U_N}$,
where $\overline{\mathcal U_N}$ denotes the closure of $\mathcal U_N$.
\end{proof}

\begin{lemma}[Finite-Sample MPPI Concentration Around
the Infinite-Sample Update]
\label{lem:approx_infinite}
Suppose Assumptions~\ref{ass:stab}--\ref{ass:mppi} hold and
$\Sigma_\epsilon\succ0$.  Fix a compact set
$\mathcal X\subset\mathbb R^n$ and a bounded set
$\mathcal U_N\subset\mathbb R^{mN}$.
Then, for any $\eta\in(0,1)$, there exist constants
$C_{\mathcal X,\mathcal U}>0$ and $M_0(\eta)$ such that, for all
$M\ge M_0(\eta)$, all $x_k\in\mathcal X$, and all
$\bar U\in\mathcal U_N$,
\begin{equation}\label{eq:finite_mppi_infinite_error}
  \|u_k^{\mathrm{MPPI}}-u_k^\infty\|
  \le
  \varepsilon_M(\eta)
  :=
  C_{\mathcal X,\mathcal U}
  \sqrt{\frac{\log(4m/\eta)}{M}}
\end{equation}
with probability at least $1-\eta$.
\end{lemma}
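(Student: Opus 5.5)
The approach is to treat $u_k^{\mathrm{MPPI}}-u_k^\infty$ as a ratio-estimator error and control numerator and denominator separately with coordinatewise Hoeffding bounds. Since $u_k^{\mathrm{MPPI}}-u_k^\infty=\hat\mu_M-\mu_\infty$ with $\hat\mu_M=\hat N_M/\hat Z_M$, we set
\[
\hat N_M:=\tfrac1M\sum_{j}w^{(j)}\epsilon_0^{(j)},\quad \hat Z_M:=\tfrac1M\sum_j w^{(j)},
\]
together with $N_\infty=\mathbb E[w\epsilon_0]$ and $Z=\mathbb E[w]$. The standard ratio decomposition is
\[
\hat\mu_M-\mu_\infty=\frac{(\hat N_M-N_\infty)-\mu_\infty(\hat Z_M-Z)}{\hat Z_M}.
\]
Lemma~\ref{lem:weight_moments} supplies $Z\ge\underline Z$ uniformly on $\mathcal X\times\mathcal U_N$. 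Moreover, $\|\mu_\infty\|\le\mathbb E[w\|\epsilon_0\|]/\underline Z\le\sqrt{\tr(\Sigma_\epsilon)}/\underline Z$, so $\mu_\infty$ is uniformly bounded.

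Concentration of $\hat Z_M$ is easy. The weights satisfy $0<w\le1$, so Hoeffding gives $|\hat Z_M-Z|\le\sqrt{\log(4/\eta')/(2M)}$ with probability at least $1-\eta'/2$. Taking $M\ge M_0(\eta)$ with $\sqrt{\log(4m/\eta)/(2M)}\le\underline Z/2$ then ensures $\hat Z_M\ge\underline Z/2$ on this event.

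Concentration of $\hat N_M$ is the main obstacle. Each coordinate $w\epsilon_{0,\ell}$ is not bounded, only sub-Gaussian. It is dominated in magnitude by $|\epsilon_{0,\ell}|$, a centered Gaussian with variance $(\Sigma_\epsilon)_{\ell\ell}$, so it is sub-Gaussian with parameter $c\,\|\Sigma_\epsilon\|$. I would use the fact that $|X|\le|Y|$ with $Y$ sub-Gaussian implies $X-\mathbb EX$ is sub-Gaussian with comparable constant, via the $\psi_2$-norm. A sub-Gaussian Hoeffding inequality for each of the $m$ coordinates, combined with a union bound that allots $\eta/(2m)$ per coordinate, yields
\[
\|\hat N_M-N_\infty\|\le c\sqrt{m\|\Sigma_\epsilon\|\log(4m/\eta)/M}.
\]
The $\log(4m/\eta)$ in the statement matches exactly this union bound over $m$ coordinates plus the denominator event. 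If Assumption~\ref{ass:bounded_mppi}-type truncation of $\epsilon$ is in force, plain Hoeffding works instead.

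Combining the two events by a union bound gives total failure probability at most $\eta$. On the good event,
\[
\|\hat\mu_M-\mu_\infty\|\le\frac{2}{\underline Z}\Bigl(c\sqrt{m\|\Sigma_\epsilon\|}+\frac{\sqrt{\tr\Sigma_\epsilon}}{\underline Z}\Bigr)\sqrt{\frac{\log(4m/\eta)}{M}}.
\]
This defines $C_{\mathcal X,\mathcal U}$, which depends on $(x,\bar U)$ only through $\underline Z$ and is therefore uniform on $\mathcal X\times\mathcal U_N$. Finally, $M_0(\eta)=\lceil 2\log(4m/\eta)/\underline Z^2\rceil$.
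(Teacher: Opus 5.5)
Your proof is correct and follows the same route as the paper: the ratio decomposition of $\hat\mu_M-\mu_\infty$, Hoeffding on the bounded weights $0<w\le1$ for the denominator, coordinatewise sub-Gaussian concentration of $w\epsilon_0$ with a union bound over the $m$ coordinates for the numerator, and a final union bound using $Z\ge\underline Z$ from Lemma~\ref{lem:weight_moments}. Your two-sided deviation bound on $|\hat Z_M-Z|$ is a useful refinement over the paper's one-sided event $\hat Z_M\ge\underline Z/2$. That deviation is exactly what makes the $\mu_\infty(\hat Z_M-Z)$ term $O(M^{-1/2})$, a step the paper's written proof leaves implicit.
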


\begin{proof}
Let $\hat Z_M:=\frac{1}{M}\sum_{j=1}^M w^{(j)}$,
$Z:=\mathbb E[w]$,
$\hat Y_M:=\frac{1}{M}\sum_{j=1}^M w^{(j)}\epsilon_0^{(j)}$,
and $Y:=\mathbb E[w\epsilon_0]$.
Then $\hat\mu_M=\hat Y_M/\hat Z_M$,
$\mu_\infty=Y/Z$, and
$u_k^{\mathrm{MPPI}}-u_k^\infty=\hat\mu_M-\mu_\infty$.

By Lemma~\ref{lem:weight_moments}, $Z\ge\underline Z>0$ uniformly.
Since $0<w\le1$, Hoeffding's inequality gives
$\mathbb P(\hat Z_M\le\underline Z/2)
\le\exp(-M\underline Z^2/2)$,
so the event $\mathcal D_M:=\{\hat Z_M\ge\underline Z/2\}$ holds with
probability at least $1-\eta/2$ for
$M\ge(2/\underline Z^2)\log(4/\eta)$.

Because $0<w\le1$ and $\epsilon_0$ is Gaussian, each coordinate of
$w\epsilon_0$ is sub-Gaussian.  A union bound over the $m$ coordinates
gives
\[
  \|\hat Y_M-Y\|
  \le
  C_Y\sqrt{\frac{\log(4m/\eta)}{M}}
\]
with probability at least $1-\eta/2$.

On the intersection of these two events,
\[
  \hat\mu_M-\mu_\infty
  =
  \frac{\hat Y_M-Y}{\hat Z_M}
  +
  Y\!\left(
    \frac{1}{\hat Z_M}-\frac{1}{Z}
  \right).
\]
Since $\hat Z_M\ge\underline Z/2$, $Z\ge\underline Z$, and
$\|Y\|\le\mathbb E[\|\epsilon_0\|]<\infty$, the union bound gives
$\|\hat\mu_M-\mu_\infty\|\le C_{\mathcal X,\mathcal U}
\sqrt{\log(4m/\eta)/M}$
with probability at least $1-\eta$.
The constants $\underline{Z}$, $C_Y$, and $C_{\mathcal{X},\mathcal{U}}$
are uniform over $\mathcal{X}\times\mathcal{U}_N$ by
Lemma~\ref{lem:weight_moments} and the compactness argument therein;
for each fixed $(x_k,\bar{U})$ the concentration event holds with
probability at least $1-\eta$.
\end{proof}

\begin{lemma}[Finite-Sample MPPI Approximation of LQR]
\label{lem:approx}
Under the same conditions as Lemma~\ref{lem:approx_infinite}, for any
$\eta\in(0,1)$ and all $M\ge M_0(\eta)$,
\begin{equation}\label{eq:mppi_lqr_with_bias}
  \|u_k^{\mathrm{MPPI}}-u_k^{\mathrm{LQR}}\|
  \le
  b_\infty(x_k,\bar U)
  +
  \varepsilon_M(\eta)
\end{equation}
with probability at least $1-\eta$, uniformly on
$\mathcal X\times\mathcal U_N$.
\end{lemma}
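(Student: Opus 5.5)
The bound \eqref{eq:mppi_lqr_with_bias} will follow from the triangle inequality, inserting the infinite-sample MPPI control $u_k^\infty=\bar u_0+\mu_\infty(x_k,\bar U)$ between the finite-sample update and the LQR law. For fixed $(x_k,\bar U)\in\mathcal X\times\mathcal U_N$ we write
\[
  \|u_k^{\mathrm{MPPI}}-u_k^{\mathrm{LQR}}\|
  \le
  \|u_k^{\mathrm{MPPI}}-u_k^\infty\|
  +
  \|u_k^\infty-u_k^{\mathrm{LQR}}\|.
\]
By the definition of the bias, the second term equals $b_\infty(x_k,\bar U)$ exactly. This term is deterministic: it depends only on $(x_k,\bar U)$ and the law of $\mathcal E$, not on the $M$ drawn samples. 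So no probability is spent on it.

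The first term is controlled by Lemma~\ref{lem:approx_infinite}. Under the same assumptions, and with the same compact $\mathcal X$ and bounded $\mathcal U_N$, for every $M\ge M_0(\eta)$ there is an event of probability at least $1-\eta$ on which $\|u_k^{\mathrm{MPPI}}-u_k^\infty\|\le\varepsilon_M(\eta)$. On that same event the displayed inequality gives \eqref{eq:mppi_lqr_with_bias}. The probability $1-\eta$ is inherited directly, and there is no union bound over the two terms because the bias term is nonrandom.

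For the uniformity claim, the constants $C_{\mathcal X,\mathcal U}$ and $M_0(\eta)$ in Lemma~\ref{lem:approx_infinite} are uniform over $\mathcal X\times\mathcal U_N$, because $\underline Z$ in Lemma~\ref{lem:weight_moments} is uniform there. Hence the radius $\varepsilon_M(\eta)$ does not depend on $(x_k,\bar U)$. The meaning of ``uniformly'' is that the threshold and the radius are uniform, while the event itself holds pointwise in $(x_k,\bar U)$ with probability $1-\eta$. I would state this explicitly to avoid implying a simultaneous event over all pairs.

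There is no real obstacle here. The only point needing care is the interpretation just described, since $b_\infty$ is left unbounded at this stage. Its closed-form control via $\kappa_\lambda$ is deferred to Proposition~\ref{prop:bias} and is not needed for this lemma.
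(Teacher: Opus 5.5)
Your proposal is correct and is essentially the paper's own proof: insert $u_k^\infty$ via the triangle inequality, bound the Monte Carlo term by Lemma~\ref{lem:approx_infinite}, and note that the remaining deterministic term is $b_\infty(x_k,\bar U)$ by definition. Your reading of ``uniformly'' (constants uniform over $\mathcal X\times\mathcal U_N$, event holding pointwise with probability at least $1-\eta$) matches the caveat the paper itself adds at the end of the proof of Lemma~\ref{lem:approx_infinite}.
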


\begin{proof}
By the triangle inequality,
$\|u_k^{\mathrm{MPPI}}-u_k^{\mathrm{LQR}}\|
\le\|u_k^{\mathrm{MPPI}}-u_k^\infty\|
+\|u_k^\infty-u_k^{\mathrm{LQR}}\|$.
The first term is bounded by Lemma~\ref{lem:approx_infinite} and the
second is $b_\infty(x_k,\bar U)$.
\end{proof}

% -----------------------------------------------------------------------
\begin{proposition}[Infinite-Sample MPPI Bias in the LTI/Quadratic Case]
\label{prop:bias}
Suppose Assumptions~\ref{ass:stab}--\ref{ass:mppi} hold.
For the LTI system~\eqref{eq:system} with quadratic
cost~\eqref{eq:cost}, the finite-horizon cost is quadratic in the
control perturbation:
\begin{equation}\label{eq:cost_quadratic}
  J(x_k,\bar{U}+\mathcal{E})
  =
  \mathcal{E}^\top H\mathcal{E}
  +2v(x_k,\bar{U})^\top\mathcal{E}
  +J(x_k,\bar{U}),
\end{equation}
where $H\succ0$ is the cost Hessian determined by $(A,B,Q,R,P,N)$ and
$v(x_k,\bar{U}):=H\bar{U}+F^\top x_k$.
Under Assumption~\ref{ass:dare}, the finite-horizon optimizer is
\begin{equation}\label{eq:Ustar}
  U^*(x_k)
  :=
  \arg\min_U J(x_k,U)
  =
  -H^{-1}F^\top x_k,
\end{equation}
so $v(x_k,\bar{U})=H(\bar{U}-U^*(x_k))$ and $v=0$ if and only if
$\bar{U}=U^*(x_k)$.
The tilted distribution $w(\mathcal{E})\cdot p(\mathcal{E})$ is Gaussian
with mean
\begin{equation}\label{eq:mustar}
  \mu^*
  =
  -\frac{2}{\lambda}\Sigma_* v(x_k,\bar{U}),
  \qquad
  \Sigma_*
  :=
  \left(
    \Sigma_\epsilon^{-1}+\frac{2H}{\lambda}
  \right)^{-1},
\end{equation}
so $u_k^\infty=\bar{u}_0+\mu^*_0$, where $\mu^*_0\in\mathbb{R}^m$
is the first control block of $\mu^*$.
Since $u_k^{\mathrm{LQR}}=U^*_0(x_k)$ under Assumption~\ref{ass:dare},
the infinite-sample bias satisfies
\begin{equation}\label{eq:bias_decomp}
  u_k^\infty - u_k^{\mathrm{LQR}}
  =
  \left(
    S_0 - \frac{2}{\lambda}[\Sigma_* H]_0
  \right)
  (\bar{U}-U^*(x_k)),
\end{equation}
where $S_0\in\mathbb{R}^{m\times mN}$ is the selection matrix
extracting the first control block and
$[\Sigma_* H]_0\in\mathbb{R}^{m\times mN}$ is the first block row
of $\Sigma_* H$.
Define the bias gain
\begin{equation}\label{eq:kappa}
  \kappa_\lambda
  :=
  \left\|
    S_0 - \frac{2}{\lambda}[\Sigma_* H]_0
  \right\|.
\end{equation}
The infinite-sample bias satisfies:
\begin{enumerate}
  \item \textbf{(Zero bias at LQR nominal.)}
    If $\bar{U}=U^*(x_k)$, then $v=0$, $\mu^*=0$, and
    $b_\infty(x_k,\bar{U})=0$.

  \item \textbf{(Explicit bias bound.)}
    For general $\bar{U}\in\mathcal{U}_N$ and $x_k\in\Omega_R$,
    \begin{equation}\label{eq:bias_explicit}
      b_\infty(x_k,\bar{U})
      \le
      \beta_\infty\|x_k\|+\beta_0,
    \end{equation}
    where
    \begin{equation}\label{eq:beta_coeffs}
      \beta_\infty
      :=
      \kappa_\lambda\|H^{-1}F^\top\|,
      \qquad
      \beta_0
      :=
      \kappa_\lambda D_{\mathcal{U}},
    \end{equation}
    and $D_{\mathcal{U}}$ is the diameter of $\mathcal{U}_N$.

  \item \textbf{(Small-gain condition is satisfiable via $\lambda\to0$.)}
    Define
    $\Phi(\beta):=2\|P\|\|B\|\|A_{\mathrm{cl}}\|\beta
    +\|P\|\|B\|^2\beta^2$.
    As $\lambda\to0$,
    $\Sigma_*\sim(\lambda/2)H^{-1}$,
    so $\frac{2}{\lambda}\Sigma_* H\to I$ and
    $\kappa_\lambda\to0$.
    Hence $\beta_\infty\to0$ and the small-gain
    condition~\eqref{eq:bias_small_gain} is satisfied for all
    $\lambda\le\lambda^*$, where
    \begin{equation}\label{eq:lambdastar}
      \lambda^*
      :=
      \sup\bigl\{
        \lambda>0
        :
        \Phi(\beta_\infty(\lambda))
        \le\tfrac{\alpha_P}{2}
      \bigr\},
    \end{equation}
    which is strictly positive and computable from
    $(A,B,Q,R,P,N,\Sigma_\epsilon,\alpha_P)$.
\end{enumerate}
\end{proposition}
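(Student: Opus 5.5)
The plan is to first write the stacked cost in batch form, then compute the tilted Gaussian mean by completing the square, and finally read off items 1--3 from the resulting linear formula.

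\textbf{Batch form and optimizer.} Stack the predicted states as $X=\mathcal A x_k+\mathcal B U$, with $\mathcal A$ the block column of powers $A^i$ and $\mathcal B$ the block lower-triangular Toeplitz matrix of $A^{i-j-1}B$. With $\bar Q=\mathrm{blkdiag}(Q,\dots,Q,P)$ and $\bar R=I_N\otimes R$ this gives
\[
H=\mathcal B^\top\bar Q\mathcal B+\bar R\succeq\bar R\succ0,\qquad F=\mathcal A^\top\bar Q\mathcal B .
\]
Expanding $J(x_k,\bar U+\mathcal E)$ yields \eqref{eq:cost_quadratic} with $v=H\bar U+F^\top x_k$. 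Setting the gradient to zero gives \eqref{eq:Ustar}, and then $v=H(\bar U-U^*)$ is immediate. Because $H$ is invertible, $v=0$ holds if and only if $\bar U=U^*$. The identity $U^*_0(x_k)=-Kx_k$ is the standard dynamic-programming fact under Assumption~\ref{ass:dare}: backward Riccati recursion started from the terminal cost $P$ leaves $P$ fixed at every stage, so each stage gain equals $K$. I will cite it rather than rederive it.

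\textbf{Tilted Gaussian.} Multiply $w(\mathcal E)$ by the Gaussian density $p(\mathcal E)\propto\exp(-\tfrac12\mathcal E^\top\Sigma^{-1}\mathcal E)$. Here $\Sigma=I_N\otimes\Sigma_\epsilon$, which the statement abbreviates as $\Sigma_\epsilon$. The exponent of the product is
\[
-\tfrac12\,\mathcal E^\top\bigl(\Sigma^{-1}+\tfrac{2}{\lambda}H\bigr)\mathcal E-\tfrac{2}{\lambda}v^\top\mathcal E+\text{const}.
\]
Completing the square gives a Gaussian with covariance $\Sigma_*$ and mean $\mu^*=-\tfrac{2}{\lambda}\Sigma_*v$. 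Therefore $\mathbb E[w\epsilon_0]/\mathbb E[w]$ equals the first block $\mu^*_0$. Combining,
\[
u^\infty_k-u^{\mathrm{LQR}}_k=S_0\bar U-\tfrac{2}{\lambda}[\Sigma_*H]_0(\bar U-U^*)-S_0U^*,
\]
using $\bar u_0=S_0\bar U$. This is exactly \eqref{eq:bias_decomp}.

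\textbf{Items 1 and 2.} Item 1 follows directly from \eqref{eq:bias_decomp}, since the right-hand side vanishes when $\bar U=U^*$. For item 2, take norms in \eqref{eq:bias_decomp} and use $\|\bar U-U^*\|\le\|\bar U\|+\|H^{-1}F^\top\|\|x_k\|$. Then bound $\|\bar U\|\le D_{\mathcal U}$, which follows from Assumption~\ref{ass:compact_mppi}; note that this implicitly requires $\mathcal U_N$ to contain the origin, or else $D_{\mathcal U}$ should be read as a radius bound.

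\textbf{Item 3.} This is the main obstacle, because it requires a limit of the bias gain as $\lambda\to0$. Write
\[
\tfrac{2}{\lambda}\Sigma_*H=\bigl(\tfrac{\lambda}{2}\Sigma^{-1}H^{-1}+I\bigr)^{-1}.
\]
This is a continuous function of $\lambda$ on $[0,\infty)$ and equals $I$ at $\lambda=0$. Hence $\kappa_\lambda=\|S_0-S_0(\cdot)\|\to0$, with explicit rate
\[
\kappa_\lambda\le\tfrac{\lambda}{2}\|\Sigma^{-1}H^{-1}\|\big/\bigl(1-\tfrac{\lambda}{2}\|\Sigma^{-1}H^{-1}\|\bigr)
\]
from a Neumann series. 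Consequently $\beta_\infty(\lambda)\to0$. Since $\Phi$ is continuous with $\Phi(0)=0<\alpha_P/2$, there is some $\lambda>0$ with $\Phi(\beta_\infty(\lambda))\le\alpha_P/2$, so $\lambda^*>0$.

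One subtlety remains in the claim that the small-gain condition holds for \emph{all} $\lambda\le\lambda^*$. Since $\kappa_\lambda$ need not be monotone in $\lambda$, I would redefine $\lambda^*$ through the monotone Neumann-series upper bound on $\kappa_\lambda$ rather than $\kappa_\lambda$ itself, or equivalently take the largest $\lambda$ for which the condition holds on all of $(0,\lambda]$. That bound is increasing in $\lambda$, so the condition holds on the whole interval, and $\lambda^*$ is computable from the listed data.
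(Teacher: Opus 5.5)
Your proposal is correct and follows the paper's proof essentially step for step: the batch quadratic form, completing the square for the tilted Gaussian, the triangle inequality for item 2, and a small-$\lambda$ expansion of $\tfrac{2}{\lambda}\Sigma_*H$ for item 3 (your Neumann series in place of the paper's Sherman--Morrison--Woodbury expansion). Your two caveats are genuine points the paper passes over: $\|\bar U\|\le D_{\mathcal U}$ needs $0\in\mathcal U_N$, and the paper only shows $\lambda^*>0$ without justifying ``for all $\lambda\le\lambda^*$,'' which your redefinition of $\lambda^*$ via the monotone Neumann-series bound repairs. One harmless slip: $\tfrac{2}{\lambda}\Sigma_*H=\bigl(I+\tfrac{\lambda}{2}H^{-1}\Sigma^{-1}\bigr)^{-1}$, whereas your expression with $\Sigma^{-1}H^{-1}$ is its transpose $\tfrac{2}{\lambda}H\Sigma_*$; since $\|\Sigma^{-1}H^{-1}\|=\|H^{-1}\Sigma^{-1}\|$, your bound on $\kappa_\lambda$ is unchanged.
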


\begin{proof}
The quadratic form~\eqref{eq:cost_quadratic} follows by substituting
$U=\bar{U}+\mathcal{E}$ into~\eqref{eq:cost}.
Under Assumption~\ref{ass:dare}, the first-order optimality condition
$HU^*+F^\top x_k=0$ gives~\eqref{eq:Ustar}, so
$v=H(\bar{U}-U^*)$.
The product $w(\mathcal{E})\cdot p(\mathcal{E})$ has log-density
\[
\begin{aligned}
  &-\frac{1}{2}\mathcal{E}^\top\Sigma_\epsilon^{-1}\mathcal{E}
  -\frac{1}{\lambda}
  \bigl(
    \mathcal{E}^\top H\mathcal{E}
    +2v^\top\mathcal{E}
  \bigr)
  +\text{const} \\
  &\quad=
  -\frac{1}{2}(\mathcal{E}-\mu^*)^\top\Sigma_*^{-1}(\mathcal{E}-\mu^*)
  +\text{const},
\end{aligned}
\]
obtained by completing the square, identifying~\eqref{eq:mustar}.

\textit{Part~1.}
If $\bar{U}=U^*$, then $v=0$, $\mu^*=0$, and
$u_k^\infty=\bar{u}_0=U^*_0=u_k^{\mathrm{LQR}}$.

\textit{Part~2.}
Substituting $v=H(\bar{U}-U^*)$ into $\mu^*=-\frac{2}{\lambda}\Sigma_* v$
and taking the first block,
$\mu^*_0=-\frac{2}{\lambda}[\Sigma_* H]_0(\bar{U}-U^*)$.
Since $u_k^\infty-u_k^{\mathrm{LQR}}=S_0(\bar{U}-U^*)+\mu^*_0$,
substituting gives~\eqref{eq:bias_decomp}.  Therefore
$b_\infty\le\kappa_\lambda\|\bar{U}-U^*\|
\le\kappa_\lambda(\|H^{-1}F^\top\|\|x_k\|+D_{\mathcal{U}})$,
giving~\eqref{eq:bias_explicit}--\eqref{eq:beta_coeffs}.

\textit{Part~3.}
By the Sherman--Morrison--Woodbury identity,
$\Sigma_*=\Sigma_\epsilon
-\Sigma_\epsilon(\frac{\lambda}{2}H^{-1}+\Sigma_\epsilon)^{-1}\Sigma_\epsilon$.
As $\lambda\to0$, $\Sigma_*=(\lambda/2)H^{-1}+O(\lambda^2)$,
so $\frac{2}{\lambda}\Sigma_* H=I+O(\lambda)\to I$ and
$\kappa_\lambda\to0$.
Since $\Phi(\beta_\infty(\lambda))$ is continuous and vanishes as
$\lambda\to0$, $\lambda^*$ is strictly positive.
\end{proof}

\begin{remark}
The $\|\Sigma_\epsilon\|\to0$ route does \emph{not} make $b_\infty\to0$.
As $\|\Sigma_\epsilon\|\to0$, $\Sigma_*\to0$ so
$\frac{2}{\lambda}\Sigma_* H\to0$ and $\kappa_\lambda\to\|S_0\|=1$.
Small sampling covariance prevents exploration away from the nominal
sequence but does not drive the infinite-sample update toward
$u_k^{\mathrm{LQR}}$.  Only $\lambda\to0$ achieves $b_\infty\to0$.
\end{remark}

\begin{remark}[Role of the Planning Horizon]
\label{rem:horizon}
Under Assumption~\ref{ass:dare}, the deterministic finite-horizon
optimizer has the same first control component as the infinite-horizon
LQR law for every horizon $N\ge1$.  In particular, if
$U_N^*(x)$ denotes the $N$-step optimal control sequence for the
cost~\eqref{eq:cost}, then its first block satisfies
$[U_N^*(x)]_0=-Kx=u_k^{\mathrm{LQR}}$ independently of $N$.
Thus the DARE terminal cost removes any horizon-truncation error in the
nominal optimal feedback law.

This does not imply that the finite-sample MPPI certificate is
independent of $N$.  The stacked Hessian $H_N$, the linear term $F_N$,
the sampling dimension $mN$, the compact warm-start set
$\mathcal U_N$, and the concentration constant
$C_{\mathcal X,\mathcal U}$ generally depend on the planning horizon.
Consequently, the bias coefficients
$\beta_{\infty}$, $\beta_0$, the finite-sample error
$\varepsilon_M(\eta)$, and the sufficient sample threshold $M^*$ may
depend on $N$.  The horizon-independent part of the result is the
nominal LQR feedback recovered by the exact finite-horizon optimizer,
not the MPPI sample complexity.
\end{remark}

% -----------------------------------------------------------------------
\subsection{Telescoping Value Function Decrease}
% -----------------------------------------------------------------------

\begin{lemma}[Telescoping Decrease]\label{lem:telescope}
Under Assumptions~\ref{ass:stab}--\ref{ass:dare}, with $V(x)=x^\top Px$,
\begin{equation}\label{eq:telescope}
  V(A_{\mathrm{cl}}x)-V(x)
  =
  -x^\top(Q+K^\top RK)x
  \le
  -\alpha_P\|x\|^2,
\end{equation}
where $A_{\mathrm{cl}}:=A-BK$, $K=(R+B^\top PB)^{-1}B^\top PA$,
and $\alpha_P:=\lambda_{\min}(Q+K^\top RK)>0$.
\end{lemma}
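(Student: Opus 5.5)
The plan is to verify the identity by direct algebra, using only the DARE~\eqref{eq:dare} and the gain formula~\eqref{eq:lqrgain}, and then get the inequality from a Rayleigh-quotient bound. Positivity of $\alpha_P$ is the one place where Assumptions~\ref{ass:stab}--\ref{ass:detect} enter in a nontrivial way.

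\textbf{Step 1 (closed-loop Lyapunov equation).} Abbreviate $S:=R+B^\top PB\succ0$, so that $SK=B^\top PA$. Expand
\[
A_{\mathrm{cl}}^\top PA_{\mathrm{cl}}
=A^\top PA-A^\top PBK-K^\top B^\top PA+K^\top B^\top PBK.
\]
Substituting $B^\top PA=SK$ gives $A^\top PBK=K^\top B^\top PA=K^\top SK$. Hence
\[
A_{\mathrm{cl}}^\top PA_{\mathrm{cl}}=A^\top PA-2K^\top SK+K^\top(S-R)K=A^\top PA-K^\top SK-K^\top RK.
\]
Since $K^\top SK=A^\top PBS^{-1}B^\top PA$, the DARE~\eqref{eq:dare} reads $P=Q+A^\top PA-K^\top SK$. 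Combining the two, $A_{\mathrm{cl}}^\top PA_{\mathrm{cl}}-P=-(Q+K^\top RK)$. Evaluating the quadratic form at $x$ gives the equality in~\eqref{eq:telescope}.

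\textbf{Step 2 (inequality).} Let $W:=Q+K^\top RK$, which is symmetric and positive semidefinite. The Rayleigh bound $x^\top Wx\ge\lambda_{\min}(W)\|x\|^2$ immediately gives $-x^\top Wx\le-\alpha_P\|x\|^2$.

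\textbf{Step 3 (positivity of $\alpha_P$; the main obstacle).} If $Q\succ0$, then $W\succeq Q\succ0$ and we are done. The general detectable case is the delicate point, because $W$ can be singular. For example, with $Q=0$ and $A$ Schur, the DARE gives $P=0$ and $K=0$, so $W=0$; that same example also contradicts the assumption $P\succ0$. So I would argue as follows.

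\begin{itemize}
\item By $R\succ0$, $W\succeq0$, and $x^\top Wx=0$ forces both $Q^{1/2}x=0$ and $Kx=0$.
\item For such a kernel vector $x$, $A_{\mathrm{cl}}x=Ax$, so $\ker W$ is contained in the unobservable-type subspace of $(A,Q^{1/2})$ along closed-loop trajectories. Detectability then says that unobservable modes are stable.
\item Stability of those modes does not by itself force $\ker W=\{0\}$.
\end{itemize}

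I would therefore note honestly that strict positivity of $\alpha_P$ needs either $Q\succ0$ or observability of $(A,Q^{1/2})$ together with $P\succ0$ (Assumption~\ref{ass:dare}). Under that reading, I would invoke the uniqueness and positive-definiteness of the DARE solution from the remark following Assumption~\ref{ass:detect}. Failing that, the fallback is a multi-step decrease over $n$ steps, $V(A_{\mathrm{cl}}^n x)-V(x)\le-c\|x\|^2$, obtained from observability of $(A_{\mathrm{cl}},W^{1/2})$. For the proposal, the main argument rests on Steps 1--2, with $\alpha_P>0$ taken from the standing assumption that $Q\succ0$ (or its observable analogue).
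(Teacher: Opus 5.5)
Your Steps~1--2 coincide with the paper's proof. The paper's entire argument is the DARE identity $A_{\mathrm{cl}}^\top PA_{\mathrm{cl}}=P-Q-K^\top RK$ followed by the Rayleigh bound, and your expansion with $S=R+B^\top PB$ and $SK=B^\top PA$ writes out the algebra the paper compresses into one line. The paper gives no argument for $\alpha_P>0$; it is only asserted in the statement. So your Step~3 addresses a point the paper skips, and you are right that detectability alone does not deliver it.

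However, the alternative sufficient condition you propose is false: observability of $(A,Q^{1/2})$ together with $P\succ0$ does not give $\alpha_P>0$. Take $A=\begin{pmatrix}0&1\\0&0\end{pmatrix}$, $B=\begin{pmatrix}0\\1\end{pmatrix}$, $Q=\mathrm{diag}(1,0)$, $R=r>0$. Then $(A,B)$ is controllable and $(A,Q^{1/2})$ is observable. Since $A^\top A=\mathrm{diag}(0,1)$ and $B^\top A=0$, the DARE has the unique solution $P=I$. Hence $K=0$, $A_{\mathrm{cl}}=A$, and $Q+K^\top RK=\mathrm{diag}(1,0)$, so $\alpha_P=0$, and $V(A_{\mathrm{cl}}x)-V(x)=-x_1^2$ vanishes at $x=(0,1)^\top$.

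This also exposes the flaw in your second bullet. Having $Q^{1/2}x=0$ and $Kx=0$ at a single instant does not put $x$ in the unobservable subspace; here $Q^{1/2}Ax=1$. So under Assumptions~\ref{ass:stab}--\ref{ass:dare} as written, the one-step claim $\alpha_P>0$ can genuinely fail. The exact condition is $\ker Q\cap\ker K=\{0\}$, and $Q\succ0$ is the clean sufficient hypothesis you should state; drop the observability alternative.

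Your $n$-step fallback is sound and needs nothing beyond the paper's assumptions. Since $A_{\mathrm{cl}}$ is Schur, $P=\sum_{j\ge0}(A_{\mathrm{cl}}^j)^\top(Q+K^\top RK)A_{\mathrm{cl}}^j$. Then $P\succ0$ by itself forces observability of $(A_{\mathrm{cl}},(Q+K^\top RK)^{1/2})$, and hence a strict decrease of $V$ over $n$ steps. That is not the one-step inequality~\eqref{eq:telescope} that the later drift arguments in the paper rely on, though.
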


\begin{proof}
The DARE identity gives
$A_{\mathrm{cl}}^\top P A_{\mathrm{cl}}=P-Q-K^\top RK$, so
$V(A_{\mathrm{cl}}x)-V(x)=-x^\top(Q+K^\top RK)x\le-\alpha_P\|x\|^2$.
\end{proof}

% -----------------------------------------------------------------------
\subsection{Closed-Loop Practical Stability}
% -----------------------------------------------------------------------

Write $u_k^{\mathrm{MPPI}}=u_k^{\mathrm{LQR}}+d_k$ where
$d_k:=u_k^{\mathrm{MPPI}}-u_k^{\mathrm{LQR}}$.
By Lemma~\ref{lem:approx}, with probability at least $1-\eta$,
$\|d_k\|\le b_\infty(x_k,\bar U_k)+\varepsilon_M(\eta)$.

Fix a Lyapunov sublevel set
\begin{equation}\label{eq:OmegaR}
  \Omega_R
  :=
  \{x\in\mathbb{R}^n : V(x)\le R\},
  \qquad
  R>0,
\end{equation}
where $R$ will be chosen in Lemma~\ref{lem:invariance} to guarantee
trajectories remain in $\Omega_R$ with probability at least $1-\delta$.

\begin{definition}[Bias Coefficients and Error Term]\label{def:bias}
For the LTI system~\eqref{eq:system} with DARE terminal
cost~\eqref{eq:dare}, define the bias gain $\kappa_\lambda$
as in~\eqref{eq:kappa} and the bias coefficients
\begin{equation}\label{eq:beta_def}
  \beta_\infty
  :=
  \kappa_\lambda\|H^{-1}F^\top\|,
  \qquad
  \beta_0
  :=
  \kappa_\lambda D_{\mathcal{U}},
\end{equation}
where $H$, $F$, and $D_{\mathcal{U}}$ are as in
Proposition~\ref{prop:bias}, and the composite error term
\begin{equation}\label{eq:eM}
  e_M(\eta) := \beta_0 + \varepsilon_M(\eta).
\end{equation}
By Proposition~\ref{prop:bias} Part~2,
$b_\infty(x,\bar{U})\le\beta_\infty\|x\|+\beta_0$
for all $x\in\Omega_R$ and $\bar{U}\in\mathcal{U}_N$, so on the good
event $\mathcal{G}_k:=\{\|d_k\|\le\beta_\infty\|x_k\|+e_M(\eta)\}$
of Lemma~\ref{lem:approx},
\begin{equation}\label{eq:dk_bound}
  \|d_k\|
  \le
  \beta_\infty\|x_k\|+e_M(\eta).
\end{equation}
\end{definition}

For the statements below, let $\alpha:=\alpha_P/\lambda_{\max}(P)$
and define
\begin{equation}\label{eq:L_def}
  L
  :=
  2\|P\|\|B\|\|A_{\mathrm{cl}}\|
  +2\|P\|\|B\|^2\beta_\infty,
\end{equation}
\begin{equation}\label{eq:Ce_def}
  C_e
  :=
  \frac{L^2}{\alpha_P}+\|P\|\|B\|^2,
  \qquad
  C_w^{(0)}
  :=
  \tr(P\Sigma_w),
\end{equation}
\begin{equation}\label{eq:Cbad_def}
  C_{\mathrm{bad}}
  :=
  \lambda_{\max}(P)
  \Bigl(
    \|A\|^2\sup_{x\in\Omega_R}\|x\|^2
    +\|B\|^2\bar{u}^2
  \Bigr)
  +\tr(P\Sigma_w),
\end{equation}
and the residual
\begin{equation}\label{eq:Delta_def}
  \Delta
  :=
  C_w^{(0)}+C_e e_M(\eta)^2+C_{\mathrm{bad}}\eta.
\end{equation}

\begin{remark}
The definition of $C_{\mathrm{bad}}$ uses Assumption~\ref{ass:bounded_mppi}
to bound $\|u_k^{\mathrm{MPPI}}\|\le\bar{u}$ almost surely.
On the bad event $\mathcal{G}_k^c$, this gives
$\|Ax_k+Bu_k^{\mathrm{MPPI}}+w_k\|^2\le(\|A\|\sqrt{R/\lambda_{\min}(P)}
+\|B\|\bar{u})^2+\tr(\Sigma_w)$ almost surely, so
$\tilde{V}_{k+1}\le C_{\mathrm{bad}}$ almost surely and the step
$\mathbb{E}[\tilde{V}_{k+1}\mathbf{1}_{\mathcal{G}_k^c}\mid\mathcal{F}_k]
\le C_{\mathrm{bad}}\eta$ is valid.
\end{remark}

% -----------------------------------------------------------------------
\begin{lemma}[High-Probability Finite-Horizon Invariance of $\Omega_R$]
\label{lem:invariance}
Suppose Assumptions~\ref{ass:stab}--\ref{ass:bounded_mppi} hold, the
bias coefficients $\beta_\infty$ and $\beta_0$ are as in
Definition~\ref{def:bias}, and the small-gain
condition~\eqref{eq:bias_small_gain} is satisfied.
For any $\delta\in(0,1)$ and any finite horizon $T\ge0$, choose
\begin{equation}\label{eq:R_choice}
  R
  \ge
  \frac{V(x_0)}{\delta}+\frac{2\Delta}{\alpha},
\end{equation}
where $\Delta$ and $\alpha$ are as in~\eqref{eq:Delta_def}
and above~\eqref{eq:L_def}.
Then the exit time
$\tau_R:=\inf\{k\ge0:x_k\notin\Omega_R\}$
satisfies
\begin{equation}\label{eq:exit_prob}
  \mathbb{P}(\tau_R\le T)
  \le
  \delta,
\end{equation}
i.e., $x_k\in\Omega_R$ for all $0\le k\le T$ with probability at least
$1-\delta$.
\end{lemma}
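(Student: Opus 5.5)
The plan is to work with the stopped process $\tilde V_k := V(x_{k\wedge\tau_R})$. First I establish a one-step conditional drift inequality for it, and then I apply a maximal inequality for nonnegative supermartingales, namely Ville's inequality, to a suitably shifted version.

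\textbf{Step 1 (good-event drift).} Fix $k$ and work on $\{\tau_R>k\}$, so that $x_k\in\Omega_R$. Write
\[
x_{k+1}=\Acl x_k+Bd_k+w_k .
\]
Expanding $V$ and using $\E[w_k\mid\mathcal F_k,d_k]=0$ gives
\[
\E[V(x_{k+1})\mid\mathcal F_k]=\E\bigl[V(\Acl x_k+Bd_k)\mid\mathcal F_k\bigr]+\tr(P\Sigma_w).
\]
Here I use that $w_k$ is independent of the sampling at step $k$. On $\mathcal G_k$, Lemma~\ref{lem:telescope} yields
\[
V(\Acl x_k+Bd_k)\le V(x_k)-\alpha_P\|x_k\|^2+2\|P\|\|B\|\|\Acl\|\|x_k\|\|d_k\|+\|P\|\|B\|^2\|d_k\|^2 .
\]
Next I insert \eqref{eq:dk_bound} and collect terms. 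The pure $\|x_k\|^2$ terms give exactly $\Phi(\beta_\infty)\|x_k\|^2$, which the small-gain condition bounds by $\tfrac{\alpha_P}{2}\|x_k\|^2$. The cross term $L\|x_k\|e_M$ is handled by Young's inequality as
\[
L\|x_k\|e_M\le\tfrac{\alpha_P}{4}\|x_k\|^2+\tfrac{L^2}{\alpha_P}e_M^2 .
\]
The remaining term is $\|P\|\|B\|^2e_M^2$. Altogether, on $\mathcal G_k$,
\[
V(\Acl x_k+Bd_k)\le V(x_k)-\tfrac{\alpha_P}{4}\|x_k\|^2+C_ee_M^2 .
\]
The constant $\alpha_P/4$ can be tightened by adjusting the split; I expect that to be what fixes the factor $2$ in \eqref{eq:R_choice}. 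I then use $\|x\|^2\ge V(x)/\lambda_{\max}(P)$ to turn the decrease into $-c\,\alpha V(x_k)$ for some fixed $c>0$.

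\textbf{Step 2 (bad event and the combined drift).} On $\mathcal G_k^c$, which has conditional probability at most $\eta$ by Lemma~\ref{lem:approx} applied on $\Omega_R\times\mathcal U_N$, the remark following \eqref{eq:Delta_def} gives $V(x_{k+1})\le C_{\mathrm{bad}}$ almost surely, so this contribution is at most $C_{\mathrm{bad}}\eta$. Splitting $\E[\cdot\mid\mathcal F_k]$ over $\mathcal G_k$ and $\mathcal G_k^c$ then yields
\[
\E[\tilde V_{k+1}\mid\mathcal F_k]\le(1-c\alpha)\tilde V_k+\Delta
\]
on $\{\tau_R>k\}$. On $\{\tau_R\le k\}$ the process is frozen, $\tilde V_{k+1}=\tilde V_k$, and the inequality $\E[\tilde V_{k+1}\mid\mathcal F_k]\le\tilde V_k$ holds trivially.

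\textbf{Step 3 (maximal inequality).} Define $S_k:=\tilde V_k+(\text{const})$ or, more simply, exploit the drift directly. Whenever $\tilde V_k\ge \Delta/(c\alpha)$, the drift gives $\E[\tilde V_{k+1}\mid\mathcal F_k]\le\tilde V_k$. Combining this with the bound $\tilde V_k\le\Delta/(c\alpha)+\ldots$ below that threshold gives
\[
\E[\tilde V_{k+1}\mid\mathcal F_k]\le\max\bigl(\tilde V_k,\;\Delta/(c\alpha)+\Delta\bigr).
\]
The cleanest route is to set $Y_k:=\max(\tilde V_k,\,2\Delta/\alpha)$ and check that $Y_k$ is a nonnegative supermartingale. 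For this check I must choose $c$ and the threshold consistently so that $2\Delta/\alpha$ is the right level. Ville's inequality then gives
\[
\mathbb P\Bigl(\max_{k\le T}Y_k\ge R\Bigr)\le\frac{Y_0}{R}\le\frac{V(x_0)+2\Delta/\alpha}{R}.
\]
On $\{\tau_R\le T\}$ we have $\tilde V_{\tau_R}>R$, so the event is contained in $\{\max_k Y_k\ge R\}$. Finally, the choice \eqref{eq:R_choice} gives
\[
\frac{V(x_0)+2\Delta/\alpha}{R}\le\delta .
\]
The cross term $(2\Delta/\alpha)/R$ is absorbed after a slight enlargement, or handled by instead working with the sharper $\mathbb P\le V(x_0)/(R-2\Delta/\alpha)$ from the shifted supermartingale $(\tilde V_k-2\Delta/\alpha)_+$.

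\textbf{Main obstacle.} The delicate step is Step 3. A drift of the form $\E[\tilde V_{k+1}\mid\mathcal F_k]\le(1-c\alpha)\tilde V_k+\Delta$ does not make $\tilde V$ itself a supermartingale, and the bad event at the boundary step is subtle: the exit step $x_{\tau_R}$ lies outside $\Omega_R$, yet it is controlled only because the bound \eqref{eq:Cbad_def} uses $x_{\tau_R-1}\in\Omega_R$. I would first try the truncation $Y_k=\max(\tilde V_k,2\Delta/\alpha)$ and verify its supermartingale property by cases according to whether $\tilde V_k$ lies above or below the threshold. Here I would need the inequality $(1-c\alpha)a+\Delta\le\max(a,2\Delta/\alpha)$, which is valid once $c\ge 1/2$. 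If that fails, the fallback is the standard bound $\E[\tilde V_k]\le V(x_0)+\Delta/(c\alpha)$ combined with a Doob-type argument on the shifted process.
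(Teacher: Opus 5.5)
\textbf{Step~3 has a genuine gap, and it cannot be closed.} The truncations $Y_k=\max(\tilde V_k,2\Delta/\alpha)$ and $(\tilde V_k-2\Delta/\alpha)_+$ are convex functions of $\tilde V_{k+1}$. Conditional Jensen therefore gives $\E[Y_{k+1}\mid\mathcal F_k]\ge\max(\E[\tilde V_{k+1}\mid\mathcal F_k],2\Delta/\alpha)$. The inequality $(1-c\alpha)a+\Delta\le\max(a,2\Delta/\alpha)$ that you plan to verify only shows that this smaller side is $\le Y_k$. It does not show $\E[Y_{k+1}\mid\mathcal F_k]\le Y_k$.

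Concretely, on $\{\tau_R>k,\ V(x_k)\le 2\Delta/\alpha\}$ you have $Y_k=2\Delta/\alpha$, but
\[
\E[Y_{k+1}\mid\mathcal F_k]=2\Delta/\alpha+\E[(V(x_{k+1})-2\Delta/\alpha)_+\mid\mathcal F_k]>Y_k,
\]
because $w_k$ is Gaussian with full support. So $Y$ is a strict submartingale near the floor, and Ville's inequality does not apply.

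Two smaller points. With the paper's $C_e$, Step~1 gives exactly $c=1/4$. No re-split reaches the $c\ge1/2$ your check needs: the small-gain condition already uses $\alpha_P/2$, and any Young split with a finite constant uses a positive amount more. The paper's ``net decay $3\alpha_P/4$'' is an arithmetic slip, not a source for the factor $2$. Also, $V(x_{k+1})\le C_{\mathrm{bad}}$ is false almost surely for unbounded $w_k$. It holds only after averaging over $w_k$, which suffices because $w_k$ is independent of $\mathcal G_k$.

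More fundamentally, no maximal inequality can work here. It would give a bound uniform in $T$, and the threshold \eqref{eq:R_choice} does not depend on $T$. Given $x_k\in\Omega_R$ and $\|u_k\|\le\bar u$, the point $Ax_k+Bu_k$ lies in a fixed ball. So $w_k$ pushes $x_{k+1}$ out of $\Omega_R$ with conditional probability at least some $p>0$. Hence $\mathbb P(\tau_R>T)\le(1-p)^T\to0$, as Remark~\ref{rem:bounded_noise} itself states. The conclusion $\mathbb P(\tau_R\le T)\le\delta$ therefore cannot hold for all $T$ with a $T$-independent $R$.

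\textbf{The paper's route does not close either.} Your fallback is essentially the paper's argument: unroll the drift to $\E[\tilde V_T]\le V(x_0)+2\Delta/\alpha$, then apply Markov at time $T$ via $\{\tau_R\le T\}\subseteq\{\tilde V_T\ge R\}$. The paper asserts the contractive drift on the frozen event $\{k\ge\tau_R\}$ ``trivially''. But there $\tilde V_{k+1}=\tilde V_k>R\ge 2\Delta/\alpha$, which violates $(1-\alpha/2)\tilde V_k+\Delta$. You were right to claim only $\E[\tilde V_{k+1}\mid\mathcal F_k]\le\tilde V_k$ there. Indeed $\E[\tilde V_T]\ge R\,\mathbb P(\tau_R\le T)$, and the right-hand side tends to $R>V(x_0)+2\Delta/\alpha$ when $x_0\neq0$. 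You also correctly noticed that $(V(x_0)+2\Delta/\alpha)/R\le\delta$ does not follow from \eqref{eq:R_choice} when $\Delta>0$.

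\textbf{What is provable is a $T$-dependent statement.} The inequality $\E[\tilde V_{k+1}\mid\mathcal F_k]\le\tilde V_k+\Delta$ holds both before and after $\tau_R$. So $\tilde V_k+(T-k)\Delta$, for $0\le k\le T$, is a nonnegative supermartingale, and Ville's inequality (or plain Markov at time $T$) gives
\[
\mathbb P(\tau_R\le T)\le\frac{V(x_0)+T\Delta}{R}.
\]
Hence $R\ge (V(x_0)+T\Delta)/\delta$ suffices. This is an implicit condition, since $\Delta$ grows linearly in $R$ through $C_{\mathrm{bad}}$, and it is solvable when $T\eta$ is small enough.
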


\begin{proof}
Define the stopped process $\tilde x_k:=x_{k\wedge\tau_R}$ and
$\tilde V_k:=V(\tilde x_k)$.
On the event $\{k<\tau_R\}$, $x_k\in\Omega_R$ and applying
Lemma~\ref{lem:telescope} and bound~\eqref{eq:dk_bound} from
Definition~\ref{def:bias} gives the one-step drift condition
\[
  \mathbb{E}[\tilde V_{k+1}\mid\mathcal{F}_k]
  \le
  \left(1-\frac{\alpha}{2}\right)\tilde V_k+\Delta.
\]
On $\{k\ge\tau_R\}$, $\tilde V_{k+1}=\tilde V_k$ and the drift is zero,
so the inequality holds trivially.
Unrolling the recursion for any fixed $T\ge0$,
\begin{equation}\label{eq:VT_bound}
  \mathbb{E}[\tilde V_T]
  \le
  \left(1-\frac{\alpha}{2}\right)^T V(x_0)
  +\frac{2\Delta}{\alpha}
  \le
  V(x_0)+\frac{2\Delta}{\alpha}.
\end{equation}
Since $\{\tau_R\le T\}\subseteq\{\tilde V_T\ge R\}$
(if the trajectory has already exited $\Omega_R$ by time $T$, then
$\tilde V_T=V(x_{\tau_R})\ge R$ by definition of $\tau_R$),
Markov's inequality applied to the fixed-time bound~\eqref{eq:VT_bound}
gives
\[
  \mathbb{P}(\tau_R\le T)
  \le
  \mathbb{P}(\tilde V_T\ge R)
  \le
  \frac{\mathbb{E}[\tilde V_T]}{R}
  \le
  \frac{V(x_0)+2\Delta/\alpha}{R}.
\]
Under~\eqref{eq:R_choice},
$R\ge V(x_0)/\delta+2\Delta/\alpha$, so
\[
  \frac{V(x_0)+2\Delta/\alpha}{R}
  \le
  \frac{V(x_0)+2\Delta/\alpha}{V(x_0)/\delta+2\Delta/\alpha}
  \le
  \delta. \qedhere
\]
\end{proof}

\begin{remark}
Lemma~\ref{lem:invariance} resolves the circularity in the compact-set
argument: the concentration bounds of
Lemmas~\ref{lem:weight_moments}--\ref{lem:approx} require $x_k\in\Omega_R$
almost surely, but forward invariance in expectation alone does not
guarantee this.
By choosing $R$ according to~\eqref{eq:R_choice}, $x_k\in\Omega_R$ for
all $0\le k\le T$ with probability at least $1-\delta$, so the
concentration bounds hold on this event over the finite horizon $T$.
\end{remark}

% -----------------------------------------------------------------------
\begin{theorem}[Practical Exponential Stability in Expectation]
\label{thm:stability}
Suppose Assumptions~\ref{ass:stab}--\ref{ass:bounded_mppi} hold and
the bias coefficients $\beta_\infty$, $\beta_0$ are as in
Definition~\ref{def:bias}.  Let
$\alpha:=\alpha_P/\lambda_{\max}(P)$.
Assume the small-gain condition
\begin{equation}\label{eq:bias_small_gain}
  \Phi(\beta_\infty)
  :=
  2\|P\|\|B\|\|A_{\mathrm{cl}}\|\beta_\infty
  +
  \|P\|\|B\|^2\beta_\infty^2
  \le
  \frac{\alpha_P}{2}
\end{equation}
holds, where $\Phi$ is as in Proposition~\ref{prop:bias} Part~3.
Fix $\delta\in(0,1)$, a finite horizon $T\ge0$, and choose $R$ according
to~\eqref{eq:R_choice}, so that $\Omega_R$~\eqref{eq:OmegaR} satisfies
$\mathbb{P}(\tau_R\le T)\le\delta$ by Lemma~\ref{lem:invariance}.
Then, for all $M\ge M_0(\eta)$ and all $x_0\in\Omega_R$, the MPPI
closed loop $x_{k+1}=Ax_k+Bu_k^{\mathrm{MPPI}}+w_k$
satisfies the following two statements.

\textbf{(i) Unconditional bound on the stopped process.}
For all $k\ge0$,
\begin{equation}\label{eq:V_stopped_bound}
  \mathbb{E}[V(x_{k\wedge\tau_R})]
  \le
  \left(1-\frac{\alpha}{2}\right)^k V(x_0)
  +
  \frac{2\Delta}{\alpha},
\end{equation}
where $\Delta$, $C_w^{(0)}$, $C_e$, $C_{\mathrm{bad}}$ are as
in~\eqref{eq:Delta_def}--\eqref{eq:Cbad_def}.

\textbf{(ii) Finite-horizon stability bound via the stopped process.}
Since $\mathbb{P}(\tau_R\le T)\le\delta$ by Lemma~\ref{lem:invariance},
on sample paths satisfying $\tau_R>T$ the stopped and unstopped
trajectories coincide: $x_{k\wedge\tau_R}=x_k$ for all $0\le k\le T$.
Therefore~\eqref{eq:V_stopped_bound} directly implies, for all
$0\le k\le T$,
\begin{equation}\label{eq:V_practical_bound}
\begin{aligned}
  \mathbb{E}[V(x_k)\mathbf{1}_{\{\tau_R>T\}}]
  &\le
  \left(1-\frac{\alpha}{2}\right)^k V(x_0) \\
  &\quad+
  \frac{2}{\alpha}
  \Bigl(
    C_w^{(0)}
    +
    C_e e_M(\eta)^2
    +
    C_{\mathrm{bad}}\eta
  \Bigr).
\end{aligned}
\end{equation}
Consequently, for all $0\le k\le T$,
\begin{equation}\label{eq:state_practical_bound}
\begin{aligned}
  \mathbb{E}[\|x_k\|\mathbf{1}_{\{\tau_R>T\}}]
  &\le
  c\rho^k\|x_0\| \\
  &\quad+
  \gamma_w\sqrt{\tr(\Sigma_w)}
  +
  \gamma_M e_M(\eta)
  +
  \gamma_\eta\sqrt{\eta},
\end{aligned}
\end{equation}
where
\[
  \rho=\sqrt{1-\tfrac{\alpha}{2}},
  \quad
  c=\sqrt{\tfrac{\lambda_{\max}(P)}{\lambda_{\min}(P)}},
\]
\[
  \gamma_w=\sqrt{\tfrac{2C_w^{(0)}}{\alpha\lambda_{\min}(P)}},
  \quad
  \gamma_M=\sqrt{\tfrac{2C_e}{\alpha\lambda_{\min}(P)}},
  \quad
  \gamma_\eta=\sqrt{\tfrac{2C_{\mathrm{bad}}}{\alpha\lambda_{\min}(P)}}.
\]
\end{theorem}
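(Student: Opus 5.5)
The plan is to establish the one-step conditional drift inequality for the stopped process,
\[
  \mathbb{E}[\tilde V_{k+1}\mid\mathcal F_k]\le\left(1-\tfrac{\alpha}{2}\right)\tilde V_k+\Delta,
\]
carefully, since Lemma~\ref{lem:invariance} only asserts it. Unrolling it gives (i), and (ii) then follows by indicator and Jensen manipulations.

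\textbf{Step 1: good event on $\{k<\tau_R\}$.} On $\{k<\tau_R\}$ we have $x_k\in\Omega_R$, which is compact. We write $x_{k+1}=\Acl x_k+Bd_k+w_k$, where $w_k$ is independent of $\mathcal F_k$ and of the MPPI samples at time $k$. Expanding $V(x_{k+1})$ and taking expectation over $w_k$ gives
\[
  \mathbb{E}_{w}[V(x_{k+1})]=V(\Acl x_k)+2x_k^\top\Acl^\top PBd_k+d_k^\top B^\top PBd_k+\tr(P\Sigma_w).
\]
On $\mathcal G_k$ we use Lemma~\ref{lem:telescope} and the bound~\eqref{eq:dk_bound}, $\|d_k\|\le\beta_\infty\|x_k\|+e_M$.

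\emph{Cross term.} It is bounded by $2\|P\|\|B\|\|\Acl\|(\beta_\infty\|x\|^2+\|x\|e_M)$.

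\emph{Quadratic term.} It is bounded by $\|P\|\|B\|^2(\beta_\infty\|x\|+e_M)^2$.

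Collecting terms, the pure $\|x\|^2$ coefficient is $\Phi(\beta_\infty)+\|P\|\|B\|^2\beta_\infty^2$ up to the factor bookkeeping. The small-gain condition~\eqref{eq:bias_small_gain} bounds the $\Phi$ part by $\alpha_P/2$. The mixed term has coefficient $L\|x\|e_M$, with $L$ as in~\eqref{eq:L_def}. Young's inequality, $L\|x\|e_M\le\frac{\alpha_P}{4}\|x\|^2+\frac{L^2}{\alpha_P}e_M^2$, absorbs it, leaving $C_ee_M^2$ as in~\eqref{eq:Ce_def}.

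I expect the main obstacle here to be the constant bookkeeping. The residual quadratic $\beta_\infty^2$ term and the Young split must fit inside the $\alpha_P$ budget. If they do not close exactly with the stated $\Phi$, I would tighten by splitting Young as $\alpha_P/4$, and fold the leftover into the margin by using $-\alpha_P\|x\|^2\le-\alpha V(x)$. The target outcome is
\[
  \mathbb{E}[V(x_{k+1})\mathbf 1_{\mathcal G_k}\mid\mathcal F_k]\le(1-\alpha/2)V(x_k)+C_w^{(0)}+C_ee_M^2 .
\]

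\textbf{Step 2: bad event.} By Lemma~\ref{lem:approx}, which applies uniformly on $\Omega_R\times\mathcal U_N$ for $M\ge M_0(\eta)$, we have $\mathbb P(\mathcal G_k^c\mid\mathcal F_k)\le\eta$. Assumption~\ref{ass:bounded_mppi} and the remark after~\eqref{eq:Cbad_def} give a conditional bound of $C_{\mathrm{bad}}$ on the next value. This yields a contribution of at most $C_{\mathrm{bad}}\eta$. Adding the good and bad contributions gives the drift with $\Delta$.

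\textbf{Step 3: stopped region.} On $\{k\ge\tau_R\}$ we have $\tilde V_{k+1}=\tilde V_k\le(1-\alpha/2)\tilde V_k+\Delta$ trivially, since $\Delta\ge0$ and, I will note, the decrease term must be handled. Here the inequality $\tilde V_k\le(1-\alpha/2)\tilde V_k+\Delta$ fails in general. So I will instead treat this case by writing the drift as $\mathbb{E}[\tilde V_{k+1}]\le\mathbb{E}[\tilde V_k\mathbf 1_{k\ge\tau_R}]+\ldots$. Failing that, I will follow the paper's convention of the frozen process and accept the recursion as stated in Lemma~\ref{lem:invariance}.

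\textbf{Conclusion.} Taking total expectations and unrolling gives~\eqref{eq:V_stopped_bound}, using $\sum_j(1-\alpha/2)^j\le2/\alpha$.

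For (ii), $V(x_k)\mathbf 1_{\{\tau_R>T\}}=V(x_{k\wedge\tau_R})\mathbf 1_{\{\tau_R>T\}}\le V(x_{k\wedge\tau_R})$ for $k\le T$, which gives~\eqref{eq:V_practical_bound}.

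For~\eqref{eq:state_practical_bound}, we use $\|x\|\le\sqrt{V(x)/\lambda_{\min}(P)}$ and Jensen's inequality, $\mathbb{E}[\sqrt{Y}]\le\sqrt{\mathbb{E} Y}$ applied to $Y=V(x_k)\mathbf 1$. We then use subadditivity, $\sqrt{a+b+c+d}\le\sqrt a+\sqrt b+\sqrt c+\sqrt d$, together with $V(x_0)\le\lambda_{\max}(P)\|x_0\|^2$. Finally $C_w^{(0)}=\tr(P\Sigma_w)$, which is absorbed into $\gamma_w\sqrt{\tr(\Sigma_w)}$ up to the $\|P\|$ factor; I would adjust $\gamma_w$ accordingly if needed.
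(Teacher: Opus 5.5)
You follow the paper's route: a conditional drift for the stopped process via a good/bad-event split and Young's inequality, then unrolling and Jensen. But there is a genuine gap exactly where you hesitated in Step~3, and falling back on ``the recursion as stated in Lemma~\ref{lem:invariance}'' does not fill it. The paper's proofs of that lemma and of part~(i) simply assert that the inequality ``holds trivially'' on $\{k\ge\tau_R\}$, and this is false. On that event $\tilde V_{k+1}=\tilde V_k=V(x_{\tau_R})>R\ge 2\Delta/\alpha$, so $\tilde V_k>(1-\alpha/2)\tilde V_k+\Delta$ everywhere on it. Worse, part~(i) is false as stated. With $\Sigma_w\succ0$ and $\|u_k\|\le\bar u$, each step from $\Omega_R$ exits with probability at least some $p>0$, so $\tau_R<\infty$ a.s. Fatou then gives $\liminf_k\mathbb E[\tilde V_k]\ge\mathbb E[V(x_{\tau_R})]>R\ge 2\Delta/\alpha$, while the right side of~\eqref{eq:V_stopped_bound} tends to $2\Delta/\alpha$. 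The missing idea is to recurse on the killed quantity $m_k:=\mathbb E[V(x_k)\mathbf 1_{\{\tau_R>k\}}]$ instead of the stopped one. Since $\{\tau_R>k\}\in\mathcal F_k$ and $\mathbf 1_{\{\tau_R>k+1\}}\le\mathbf 1_{\{\tau_R>k\}}$, your Steps~1--2 give $m_{k+1}\le\mathbb E\bigl[\mathbf 1_{\{\tau_R>k\}}\mathbb E[V(x_{k+1})\mid\mathcal F_k]\bigr]\le(1-a)m_k+\Delta$, where $a$ is the rate from Step~1 and $m_0=V(x_0)$. Then~\eqref{eq:V_practical_bound} follows from $\mathbf 1_{\{\tau_R>T\}}\le\mathbf 1_{\{\tau_R>k\}}$ for $k\le T$. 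This uses neither part~(i) nor $\mathbb P(\tau_R\le T)\le\delta$, and your final Jensen/subadditivity step goes through unchanged.

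Step~1 also does not deliver the stated rate, and your sketched fix cannot. There is no extra $\beta_\infty^2$ term: the pure $\|x_k\|^2$ coefficient is exactly $\Phi(\beta_\infty)$, which already contains $\|P\|\|B\|^2\beta_\infty^2$, and the mixed coefficient is exactly $L$. The margin is then $-\alpha_P+\alpha_P/2+\alpha_P/4=-\alpha_P/4$, so you obtain $(1-\alpha/4)V(x_k)+C_w^{(0)}+C_e e_M^2$. The conversion $-\alpha_P\|x\|^2\le-\alpha V(x)$ only turns the leftover $-\tfrac{\alpha_P}{4}\|x\|^2$ into $-\tfrac{\alpha}{4}V(x)$. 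With $\Phi(\beta_\infty)=\alpha_P/2$ and $e_M>0$, no Young split leaves $\alpha_P/2$. The paper's ``net decay $3\alpha_P/4>\alpha_P/2$'' is the same slip, since $3\alpha_P/4$ is what is consumed. You have two options. Carry $\alpha/4$ through, giving floor $4\Delta/\alpha$, $\rho=\sqrt{1-\alpha/4}$, and doubled constants under the $\gamma$ square roots. Or strengthen the small-gain condition to $\Phi(\beta_\infty)\le\alpha_P/4$, which with the stated $C_e$ gives exactly $(1-\alpha/2)$ and $2\Delta/\alpha$.

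Two smaller points. Your doubt about $\gamma_w$ is justified. Jensen yields $\sqrt{2C_w^{(0)}/(\alpha\lambda_{\min}(P))}$, which equals $\gamma_w\sqrt{\tr(\Sigma_w)}$ only for $\gamma_w=\sqrt{2\lambda_{\max}(P)/(\alpha\lambda_{\min}(P))}$; the stated $\gamma_w$ makes the floor scale like $\tr(\Sigma_w)$. In Step~2, the bound holds only after integrating out $w_k$, which is independent of the step-$k$ samples. That gives $\lambda_{\max}(P)(\|A\|\|x_k\|+\|B\|\bar u)^2+\tr(P\Sigma_w)$, so the first part of $C_{\mathrm{bad}}$ needs a factor $2$.
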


\begin{proof}[Proof of Theorem~\ref{thm:stability}]
\textit{Proof of (i).}
Define the stopped process $\tilde{x}_k:=x_{k\wedge\tau_R}$,
$\tilde{V}_k:=V(\tilde{x}_k)$, and let $\mathcal{F}_k$ denote the
natural filtration.
On the event $\{k<\tau_R\}$, $\tilde{x}_k=x_k\in\Omega_R$.
Decompose the conditional expectation via the good event
$\mathcal{G}_k:=\{\|d_k\|\le\beta_\infty\|x_k\|+e_M(\eta)\}$:
\[
  \mathbb{E}[\tilde{V}_{k+1}\mid\mathcal{F}_k]
  =
  \mathbb{E}[\tilde{V}_{k+1}\mathbf{1}_{\mathcal{G}_k}\mid\mathcal{F}_k]
  +
  \mathbb{E}[\tilde{V}_{k+1}\mathbf{1}_{\mathcal{G}_k^c}\mid\mathcal{F}_k].
\]
By Lemma~\ref{lem:approx},
$\mathbb{P}(\mathcal{G}_k^c\mid\mathcal{F}_k)\le\eta$ at each step.

\textit{Good-event term.}
On $\mathcal{G}_k$, write $x_{k+1}=A_{\mathrm{cl}}x_k+Bd_k+w_k$.
By Lemma~\ref{lem:telescope}, $V(A_{\mathrm{cl}}x_k)\le(1-\alpha)V(x_k)$.
Taking expectation over $w_k$ using $\mathbb{E}[w_k\mid\mathcal{F}_k]=0$
and $\mathbb{E}[w_kw_k^\top\mid\mathcal{F}_k]=\Sigma_w$,
\[
\begin{aligned}
  \mathbb{E}[V(A_{\mathrm{cl}}x_k+w_k)\mid\mathcal{F}_k]
  &=
  V(A_{\mathrm{cl}}x_k)+\tr(P\Sigma_w) \\
  &\le
  (1-\alpha)V(x_k)+C_w^{(0)}.
\end{aligned}
\]
Expanding the quadratic Lyapunov function,
\[
\begin{aligned}
  &V(A_{\mathrm{cl}}x_k+Bd_k+w_k) \\
  &\quad=
  V(A_{\mathrm{cl}}x_k+w_k)
  +2(Bd_k)^\top P(A_{\mathrm{cl}}x_k+w_k) \\
  &\qquad+
  (Bd_k)^\top P(Bd_k).
\end{aligned}
\]
Since $w_k$ is independent of $d_k$ conditionally on $\mathcal{F}_k$,
the cross term $2(Bd_k)^\top Pw_k$ vanishes in conditional expectation.
Using bound~\eqref{eq:dk_bound} and expanding,
\[
\begin{aligned}
  &2\|P\|\|B\|\|A_{\mathrm{cl}}\|
  \bigl(\beta_\infty\|x_k\|+e_M\bigr)\|x_k\| \\
  &\quad+
  \|P\|\|B\|^2
  \bigl(\beta_\infty\|x_k\|+e_M\bigr)^2 \\
  &=
  \Phi(\beta_\infty)\|x_k\|^2
  +
  L\,e_M\|x_k\|
  +
  \|P\|\|B\|^2 e_M^2.
\end{aligned}
\]
The $\|x_k\|^2$ coefficient satisfies $\Phi(\beta_\infty)\le\alpha_P/2$
by~\eqref{eq:bias_small_gain}.
Young's inequality $Le_M\|x_k\|\le(\alpha_P/4)\|x_k\|^2
+(L^2/\alpha_P)e_M^2$ absorbs a further $\alpha_P/4$ into the decay,
leaving residual $C_e e_M(\eta)^2$.
The net decay is $3\alpha_P/4>\alpha_P/2$, confirming
$(\alpha/2)V(x_k)$.  Therefore on $\mathcal{G}_k$,
\[
  \mathbb{E}[\tilde{V}_{k+1}\mid\mathcal{F}_k]
  \le
  \left(1-\frac{\alpha}{2}\right)\tilde{V}_k
  +C_w^{(0)}+C_e e_M(\eta)^2.
\]

\textit{Bad-event term.}
On $\mathcal{G}_k^c$, $\tilde{x}_k\in\Omega_R$ by the stopping, so
$\|\tilde{x}_k\|^2\le R/\lambda_{\min}(P)$.
By Assumption~\ref{ass:bounded_mppi}, $\|u_k^{\mathrm{MPPI}}\|\le\bar{u}$
almost surely, so $\tilde{V}_{k+1}\le C_{\mathrm{bad}}$ almost surely.
Therefore
\[
  \mathbb{E}[\tilde{V}_{k+1}\mathbf{1}_{\mathcal{G}_k^c}\mid\mathcal{F}_k]
  \le
  C_{\mathrm{bad}}\cdot\mathbb{P}(\mathcal{G}_k^c\mid\mathcal{F}_k)
  \le
  C_{\mathrm{bad}}\eta.
\]

\textit{Combined one-step drift.}
On $\{k\ge\tau_R\}$, $\tilde{V}_{k+1}=\tilde{V}_k$ and the drift is
zero.  Combining via the law of total expectation,
\begin{equation}\label{eq:one_step_drift}
  \mathbb{E}[\tilde{V}_{k+1}\mid\mathcal{F}_k]
  \le
  \left(1-\frac{\alpha}{2}\right)\tilde{V}_k+\Delta.
\end{equation}
Unrolling~\eqref{eq:one_step_drift} yields~\eqref{eq:V_stopped_bound}.

\textit{Proof of (ii).}
Fix any $T\ge0$ and any $0\le k\le T$.
On the event $\{\tau_R>T\}$, no exit has occurred by time $T$, so
$x_j=\tilde{x}_j$ for all $0\le j\le T$.
Therefore
\[
  \mathbb{E}[V(x_k)\mathbf{1}_{\{\tau_R>T\}}]
  \le
  \mathbb{E}[V(x_{k\wedge\tau_R})]
  =
  \mathbb{E}[\tilde{V}_k],
\]
and~\eqref{eq:V_stopped_bound} gives the right-hand side
of~\eqref{eq:V_practical_bound}.
Applying
$\lambda_{\min}(P)\|x\|^2\le V(x)\le\lambda_{\max}(P)\|x\|^2$
and Jensen's inequality gives~\eqref{eq:state_practical_bound}.
\end{proof}

\begin{remark}[Recovery of the LQR Limit]
If $\bar{U}=U^*$ at every step (Proposition~\ref{prop:bias} Part~1),
then $\kappa_\lambda=0$, $\beta_\infty=\beta_0=0$, and
$e_M(\eta)=\varepsilon_M(\eta)\to0$ as $M\to\infty$.
The bound converges to the stochastic LQR bound.
If $\kappa_\lambda>0$, increasing $M$ removes only the Monte Carlo
error $\varepsilon_M(\eta)$; the residual $\beta_0=\kappa_\lambda
D_{\mathcal{U}}$ is the irreducible temperature-smoothing bias.
\end{remark}

% -----------------------------------------------------------------------
\subsection{ISS Interpretation}
% -----------------------------------------------------------------------
\begin{proposition}[Practical ISS Bound]\label{prop:iss}
Under Theorem~\ref{thm:stability}, fix any finite horizon $T\ge0$.
For all $0\le k\le T$,
\begin{equation}\label{eq:iss}
\begin{aligned}
  \mathbb{E}[\|x_k\|\mathbf{1}_{\{\tau_R>T\}}]
  &\le
  \beta(\|x_0\|,k)
  +
  \gamma_w\sqrt{\tr(\Sigma_w)} \\
  &\quad+
  \gamma_M e_M(\eta)
  +
  \gamma_\eta\sqrt{\eta},
\end{aligned}
\end{equation}
where $\beta(s,k)=c\rho^k s$ is class $\mathcal{KL}$ in $(s,k)$.
Since $\mathbb{P}(\tau_R>T)\ge1-\delta$ by Lemma~\ref{lem:invariance},
the left-hand side satisfies
\[
  \mathbb{E}[\|x_k\|\mathbf{1}_{\{\tau_R>T\}}]
  \ge
  (1-\delta)\,\mathbb{E}[\|x_k\|\mid\tau_R>T],
\]
so the bound also implies
\[
\begin{aligned}
  &\mathbb{E}[\|x_k\|\mid\tau_R>T] \\
  &\quad \le \frac{\beta(\|x_0\|,k)
  +\gamma_w\sqrt{\tr(\Sigma_w)}
  +\gamma_M e_M(\eta)
  +\gamma_\eta\sqrt{\eta}}{1-\delta}  
\end{aligned}
\]
as an explicit conditional statement for $0\le k\le T$.
\end{proposition}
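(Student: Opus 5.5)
The plan is to read the bound~\eqref{eq:iss} off Theorem~\ref{thm:stability}(ii) and then pass to the conditional statement by elementary conditioning. The substance is already in~\eqref{eq:state_practical_bound}; the work is to check the $\mathcal{KL}$ property and to handle the indicator correctly.

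\textbf{Step 1: the unconditional bound.} Setting $\beta(s,k):=c\rho^k s$, the bound~\eqref{eq:state_practical_bound} is verbatim~\eqref{eq:iss} for $0\le k\le T$. It then suffices to check that $\beta$ is class $\mathcal{KL}$. Since $c=\sqrt{\lambda_{\max}(P)/\lambda_{\min}(P)}>0$, for each fixed $k$ the map $s\mapsto c\rho^k s$ is continuous, zero at zero and strictly increasing, so it is class $\mathcal K$. For fixed $s$, the map $k\mapsto c\rho^k s$ is nonincreasing and tends to $0$ provided $\rho\in(0,1)$.

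\textbf{Step 2: $\rho\in(0,1)$, the only point needing care.} This reduces to $0<\alpha<2$. Positivity holds because $\alpha_P>0$. For the upper bound, Lemma~\ref{lem:telescope} gives
\[
0\le V(\Acl x)=V(x)-x^\top(Q+K^\top RK)x\le(1-\alpha)V(x),
\]
which forces $\alpha\le1$. Hence $\rho=\sqrt{1-\alpha/2}\in[\sqrt{1/2},1)$.

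\textbf{Step 3: the conditional statement.} I would use $\mathbf 1_{\{\tau_R>T\}}\ge0$ and write
\[
\E[\|x_k\|\mathbf 1_{\{\tau_R>T\}}]=\mathbb P(\tau_R>T)\,\E[\|x_k\|\mid\tau_R>T].
\]
The conditional expectation is well defined because Lemma~\ref{lem:invariance} gives $\mathbb P(\tau_R>T)\ge1-\delta>0$. The conditional expectation is also finite: on $\{\tau_R>T\}$ we have $x_k\in\Omega_R$, so $\|x_k\|$ is bounded there. Since that expectation is nonnegative, the identity yields the stated lower bound $(1-\delta)\,\E[\|x_k\|\mid\tau_R>T]$. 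Combining this with~\eqref{eq:iss} and dividing by $1-\delta$ gives the conditional bound for every $0\le k\le T$.
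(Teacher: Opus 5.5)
Your proposal is correct and follows the paper's own (implicit) argument: \eqref{eq:iss} is read off verbatim from \eqref{eq:state_practical_bound} in Theorem~\ref{thm:stability}(ii), and the conditional form follows from $\mathbb{E}[\|x_k\|\mathbf{1}_{\{\tau_R>T\}}]=\mathbb{P}(\tau_R>T)\,\mathbb{E}[\|x_k\|\mid\tau_R>T]$ together with $\mathbb{P}(\tau_R>T)\ge1-\delta$. Your check that $\alpha\le1$, so that $\rho\in[\sqrt{1/2},1)$ and $\beta$ is genuinely class $\mathcal{KL}$, is a detail the paper asserts without proof, and your verification of it is correct.
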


\begin{remark}
The bound~\eqref{eq:iss} decomposes into three floors:
(i)~$\gamma_w\sqrt{\tr(\Sigma_w)}$ is the process-noise floor,
unavoidable under persistent Gaussian disturbances;
(ii)~$\gamma_M e_M(\eta)$ is the MPPI approximation floor, containing
both the temperature bias $\beta_0=\kappa_\lambda D_{\mathcal{U}}$ and
the Monte Carlo error $\varepsilon_M(\eta)$ --- only the latter vanishes
as $M\to\infty$;
(iii)~$\gamma_\eta\sqrt{\eta}$ is the confidence floor from the
per-step good-event failure probability.
The localization event $\{\tau_R>T\}$ has probability at least
$1-\delta$ for any chosen $T$, $\delta$, and $R$ satisfying
Lemma~\ref{lem:invariance}.  By taking $T$ large and $\delta$ small,
the certificate covers any operationally relevant horizon at any
desired confidence level.
\end{remark}

% -----------------------------------------------------------------------
\subsection{Explicit Sample Requirement}
% -----------------------------------------------------------------------

\begin{corollary}[Sample Count for Guaranteed Stability]
\label{cor:mstar}
Fix a desired invariance failure probability $\delta\in(0,1)$,
per-step sampling failure probability $\eta\in(0,1)$, and finite-sample
accuracy $\varepsilon>0$.  Suppose the small-gain
condition~\eqref{eq:bias_small_gain} holds and
$e_M(\eta)\le\beta_0+\varepsilon$.
The stability certificate is obtained in two steps.

\textbf{Step 1: Choose $M^*$ from $\eta$ and $\varepsilon$.}
Under Lemma~\ref{lem:approx_infinite}, it is sufficient to choose
\begin{equation}\label{eq:mstar}
  M
  \ge
  M^*
  :=
  \left\lceil
    \frac{
      C_{\mathcal X,\mathcal U}^2\log(4m/\eta)
    }{
      \varepsilon^2
    }
  \right\rceil
\end{equation}
to guarantee $\|u_k^{\mathrm{MPPI}}-u_k^\infty\|\le\varepsilon$
with probability at least $1-\eta$.
The threshold satisfies
\[
  M^*
  =
  O\!\left(
    \frac{\|P\|^2\|B\|^2\|A_{\mathrm{cl}}\|^2}{\alpha_P^2}
    \log\frac{m}{\eta}
  \right)
\]
and depends on $\eta$ but not on $\delta$.

\textbf{Step 2: Choose $R$ from $\delta$, $M^*$, and $\eta$.}
With $M^*$ fixed, compute $\Delta(M^*,\eta)$ from~\eqref{eq:Delta_def}
and choose
\[
  R
  \ge
  \frac{V(x_0)}{\delta}+\frac{2\Delta(M^*,\eta)}{\alpha}
\]
according to~\eqref{eq:R_choice}.
This guarantees
$\mathbb{P}(\tau_R\le T)\le\delta$ by Lemma~\ref{lem:invariance},
so on sample paths with $\tau_R>T$ (probability at least $1-\delta$)
the stability bound~\eqref{eq:V_practical_bound} holds for all
$0\le k\le T$.

To ensure the small-gain condition, choose $\lambda\le\lambda^*$
from Proposition~\ref{prop:bias}, which guarantees
$\Phi(\beta_\infty)\le\alpha_P/2$ by construction.
\end{corollary}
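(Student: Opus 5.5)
The plan is to treat Corollary~\ref{cor:mstar} as a bookkeeping consequence of Lemma~\ref{lem:approx_infinite}, Lemma~\ref{lem:invariance}, Theorem~\ref{thm:stability}, and Proposition~\ref{prop:bias} Part~3. We verify its three claims in order: the form of $M^*$, the choice of $R$, and satisfiability of the small-gain condition.

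\textbf{Step 1: sample threshold.} Starting from \eqref{eq:finite_mppi_infinite_error}, the requirement $\varepsilon_M(\eta)\le\varepsilon$ is equivalent to
\[
C_{\mathcal X,\mathcal U}^2\log(4m/\eta)/M\le\varepsilon^2 ,
\]
which is exactly \eqref{eq:mstar} after rounding up. We must also have $M\ge M_0(\eta)$. From the proof of Lemma~\ref{lem:approx_infinite}, $M_0(\eta)$ is governed by the Hoeffding condition $M\ge(2/\underline Z^2)\log(4/\eta)$. By enlarging $C_{\mathcal X,\mathcal U}$ if necessary (it may absorb $\sqrt{2}\,\varepsilon/\underline Z$), we can take $M^*\ge M_0(\eta)$, so we state $M\ge\max\{M^*,M_0(\eta)\}$.

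Then $e_M(\eta)\le\beta_0+\varepsilon$, and $\Delta$ in \eqref{eq:Delta_def} becomes a function of $(\varepsilon,\eta)$ only. Neither quantity involves $\delta$, which justifies the claim that $M^*$ is independent of $\delta$.

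For the $O(\cdot)$ form, we choose the target accuracy $\varepsilon$ so that the approximation floor is commensurate with the Lyapunov margin. A natural choice is $\varepsilon\asymp\alpha_P/(\|P\|\|B\|\|\Acl\|)$, which makes $L\varepsilon$ of order $\alpha_P$; recall $L\approx2\|P\|\|B\|\|\Acl\|$ when $\beta_\infty$ is small. Substituting into \eqref{eq:mstar} yields the stated scaling $\|P\|^2\|B\|^2\|\Acl\|^2\alpha_P^{-2}\log(m/\eta)$, with $C_{\mathcal X,\mathcal U}^2$ absorbed into the constant.

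\textbf{Main obstacle.} This scaling step is the weakest link. $C_{\mathcal X,\mathcal U}$ depends on $\underline Z$, $R$, and $N$, so the $O(\cdot)$ claim hides problem-dependent constants. I would state it explicitly as "for fixed $C_{\mathcal X,\mathcal U}$ and the choice $\varepsilon=c_0\alpha_P/(\|P\|\|B\|\|\Acl\|)$."

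There is also a circularity to address. $C_{\mathcal X,\mathcal U}$ is defined on $\mathcal X=\Omega_R$, while $R$ is chosen in Step~2 from $\Delta$. I would resolve it by fixing an a priori bound $\bar R$ on the admissible $R$, computing $C_{\mathcal X,\mathcal U}$ on $\Omega_{\bar R}$, and then checking that the $R$ chosen in Step~2 satisfies $R\le\bar R$. Alternatively, we can note that $\Delta$ is bounded uniformly in $M\ge M^*$ because $e_M\le\beta_0+\varepsilon$. The one genuinely $R$-dependent term is $C_{\mathrm{bad}}\eta$, and it can be made small by choosing $\eta$ small.

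\textbf{Step 2: choice of $R$.} With $M^*$ and $\eta$ fixed, $\Delta(M^*,\eta)$ is a computable number. Choosing $R$ per \eqref{eq:R_choice} and invoking Lemma~\ref{lem:invariance} gives $\mathbb P(\tau_R\le T)\le\delta$. Theorem~\ref{thm:stability}(ii) then yields \eqref{eq:V_practical_bound} on $\{\tau_R>T\}$.

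\textbf{Step 3: small-gain condition.} Proposition~\ref{prop:bias} Part~3 shows that $\Phi(\beta_\infty(\lambda))\le\alpha_P/2$ for all $\lambda\le\lambda^*$. To make "for all $\lambda\le\lambda^*$" rigorous, rather than only for $\lambda$ near the supremum, I would use that $\kappa_\lambda$ is monotone, or else replace $\lambda^*$ by the largest $\lambda$ such that the condition holds on the whole interval $(0,\lambda]$. Continuity of $\kappa_\lambda$ makes this interval nonempty.
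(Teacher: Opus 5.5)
Your route is the paper's. The corollary has no separate proof there, only the inline assembly of Lemma~\ref{lem:approx_infinite}, Lemma~\ref{lem:invariance}, Theorem~\ref{thm:stability} and Proposition~\ref{prop:bias} Part~3. Several of your additions repair points the paper skips:
the floor $M\ge\max\{M^*,M_0(\eta)\}$;
replacing the supremum $\lambda^*$ by the largest $\lambda$ that is feasible on all of $(0,\lambda]$;
reading the $O(\cdot)$ form as valid only for a chosen $\varepsilon$ with $C_{\mathcal X,\mathcal U}$ held fixed. That choice of $\varepsilon$ is a normalization, not a stability requirement, since the Young step in Theorem~\ref{thm:stability} absorbs any $e_M(\eta)$.

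The genuine gap is your argument that $M^*$ does not depend on $\delta$. It is a non sequitur, and its second half is false. $\Delta$ does depend on $R$, through $C_{\mathrm{bad}}$ and $\sup_{x\in\Omega_R}\|x\|^2=R/\lambda_{\min}(P)$; you concede this a paragraph later. In any case $M^*=\lceil C_{\mathcal X,\mathcal U}^2\log(4m/\eta)/\varepsilon^2\rceil$ does not contain $\Delta$.

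Its $\delta$-dependence is decided by $C_{\mathcal X,\mathcal U}$ and $M_0(\eta)$. These must be computed with $\mathcal X=\Omega_R$, because the good-event bound is invoked at every $x_k$ with $k<\tau_R$, and $R\ge V(x_0)/\delta$. The dependence is severe. With the DARE terminal cost, $\min_U J(x,U)=x^\top Px$, so $w\le e^{-x^\top Px/\lambda}$. Hence $\underline Z\le e^{-R/\lambda}$, and the paper's own threshold gives $M_0(\eta)\ge 2e^{2R/\lambda}\log(4/\eta)$.

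This is not merely an artifact of unnormalized weights. After the factor $e^{-J(x,\bar U)/\lambda}$ cancels in the self-normalized ratio, the relative variance of the weights still grows at least like $e^{c\|v\|^2}$ for some $c>0$, where $v=H(\bar U-U^*(x))$. For $F\neq0$, $v$ is unbounded on $\Omega_R$ as $R\to\infty$. Your own circularity fix, computing the constant on $\Omega_{\bar R}$ with $\bar R\ge R$, exhibits exactly this dependence, so the proposal contradicts itself at this point.

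A consistent statement reverses the paper's two steps. First fix $\varepsilon$ and $\eta$ with
\[
\eta<\frac{\alpha\lambda_{\min}(P)}{2\lambda_{\max}(P)\|A\|^2}.
\]
This is the precise form of your ``choose $\eta$ small''. Condition \eqref{eq:R_choice} is affine in $R$ through $C_{\mathrm{bad}}\eta$, with slope $2\eta\lambda_{\max}(P)\|A\|^2/(\alpha\lambda_{\min}(P))$, and it is solvable only when that slope is below one.

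Next choose $R$ from $\delta$ using $\Delta\le C_w^{(0)}+C_e(\beta_0+\varepsilon)^2+C_{\mathrm{bad}}(R)\,\eta$. Only afterwards compute $\underline Z$, $C_{\mathcal X,\mathcal U}$ and $M_0(\eta)$ on $\Omega_R$, and impose $M\ge\max\{M^*,M_0(\eta)\}$. This closes the loop, because for such $M$ one has $e_M(\eta)\le\beta_0+\varepsilon$ on $\Omega_R$.

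The price is $M^*=M^*(\varepsilon,\eta,\delta)$, and the same hidden dependence sits inside the constant of the $O(\cdot)$ bound. What you can defend is only that \eqref{eq:mstar} contains no \emph{explicit} $\delta$. The paper asserts $\delta$-independence without justification, so you inherit the defect rather than create it. Your proof should flag that claim rather than purport to establish it.
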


\begin{remark}[Bounded Noise and Infinite-Horizon Invariance]
\label{rem:bounded_noise}
The finite-horizon localization $\{\tau_R>T\}$ in
Theorem~\ref{thm:stability} is not an artifact of the proof technique
but reflects a fundamental property of unbounded Gaussian noise: for
any fixed bounded set $\Omega_R$, persistent $\mathcal{N}(0,\Sigma_w)$
disturbances guarantee $\mathbb{P}(\tau_R<\infty)=1$, so the
infinite-horizon event $\{\tau_R=\infty\}$ has probability zero
regardless of $M$ or the stability margin.
The finite-horizon framework adopted here is therefore the honest
statement under Gaussian process noise, and it matches the structure
of the companion nonlinear paper~\cite{yoon2026p2} exactly.

If the process noise is bounded almost surely, for example by using a
truncated or clipped Gaussian $w_k\sim\mathcal{N}(0,\Sigma_w)$
conditioned on $\|w_k\|\le w_{\max}$, then the infinite-horizon claim
$\mathbb{P}(\tau_R<\infty)\le\delta$ becomes achievable.
Under bounded noise, a single disturbance realization cannot exit
$\Omega_R$ in one step when $R$ is chosen sufficiently large relative
to $w_{\max}$, so the passage $k\to\infty$ in the supermartingale
argument is valid and Lemma~\ref{lem:invariance} upgrades to
$\mathbb{P}(\tau_R\le T)\le\delta$ for all $T$ simultaneously.
Extending the present framework to bounded noise models, and
characterizing the approximation gap between clipped and true Gaussian
noise in terms of the truncation parameter $w_{\max}$, is left as
future work.
\end{remark}

% -----------------------------------------------------------------------
\section{Simulation Study}\label{sec:experiments}

\subsection{Setup}

We benchmark on the double-integrator
\begin{equation}\label{eq:di}
  A = \begin{pmatrix}1&1\\0&1\end{pmatrix},\quad
  B = \begin{pmatrix}0\\1\end{pmatrix},\quad
  Q = I_2,\quad R = 0.1.
\end{equation}
This is the 2-state position--velocity subsystem of the 4-state UAV
double-integrator used in Section~IV-A of~\cite{yoon2022sampling},
specialized to the obstacle-free quadratic-cost case with the DARE
terminal cost used in the present theory.  The simulation infrastructure
from~\cite{yoon2022sampling} is reused and extended with
receding-horizon execution and the closed-loop stability diagnostics
described below.  All simulations are GPU-accelerated via PyTorch on an
NVIDIA GeForce RTX 4060.  Simulation code is publicly available
at~\url{https://github.com/LCAS-Lab/mppi-lti-stability}.

The DARE yields
\[
P\approx
\begin{pmatrix}
2.6687 & 1.7266\\
1.7266 & 2.8812
\end{pmatrix},
\qquad
K\approx[0.5792\;\;1.5456],
\]
giving closed-loop eigenvalues approximately
$\{0.3616,0.0928\}$ and spectral radius
$\rho(A_{\mathrm{cl}})\approx0.3616$.  The process noise is
$\Sigma_w=\sigma_w^2 I_2$ with
$\sigma_w\in\{0.05,0.10,0.20\}$.

The MPPI parameters are horizon $N=10$, temperature $\lambda=1.0$,
and isotropic Gaussian perturbations
$\epsilon\sim\mathcal N(0,I_{mN})$.
Since the input is scalar, $m=1$.
Unless otherwise stated, the initial condition is $x_0=(5,5)^\top$
and Monte Carlo averages are computed over 300 closed-loop trials.

The numerical sample threshold is computed from
Corollary~\ref{cor:mstar} using the benchmark constant $C_1=0.22$,
giving $M^*=153$ at confidence parameter $\eta=0.05$.
This value is the analytical certificate threshold for this benchmark,
not a universal MPPI constant.

\subsection{Analytical Parameter Summary}

Table~\ref{tab:params} records the theoretical quantities computed from
the DARE solution and used for comparison with simulation.

\begin{table}[h]
\centering
\caption{Analytical stability parameters for the double integrator.}
\label{tab:params}
\renewcommand{\arraystretch}{1.2}
\begin{tabular}{lll}
\toprule
\textbf{Quantity} & \textbf{Formula} & \textbf{Value} \\
\midrule
$\lambda_{\min}(P)$ & DARE & $1.0451$ \\
$\lambda_{\max}(P)$ & DARE & $4.5048$ \\
$\alpha_P$ & $\lambda_{\min}(Q+K^\top RK)$ & $1.0000$ \\
$\alpha$ & $\alpha_P/\lambda_{\max}(P)$ & $0.2220$ \\
$c$ & $\sqrt{\lambda_{\max}(P)/\lambda_{\min}(P)}$ & $2.0762$ \\
$\rho$ (certificate bound) & $(1-\alpha/2)^{1/2}$ & $0.9429$ \\
$\rho_{\mathrm{LQR}}$ & $\rho(A_{\mathrm{cl}})$ & $0.3616$ \\
$C_w^{(0)}$ ($\sigma_w=0.1$) &
$\mathrm{tr}(P\Sigma_w)$ &
$0.0555$ \\
$M^*$ ($\eta=0.05$, $C_1=0.22$) & Eq.~\eqref{eq:mstar} & $153$ \\
\bottomrule
\end{tabular}
\end{table}

The certificate decay rate $\rho=0.943$ is substantially looser than
the LQR spectral radius $\rho_{\mathrm{LQR}}=0.362$, a gap of
approximately $2.6\times$.  This is expected: the proof uses worst-case
perturbation bounds and norm inequalities to obtain a closed-form
guarantee. The experiments below test whether the certified threshold
$M^*$ is qualitatively consistent with the empirically observed onset
of stable behavior.

\subsection{Experiment 1: Bound Envelope and Closed-Loop Response}

For $M\in\{50,200,1000\}$ and $\sigma_w=0.1$, Fig.~\ref{fig:bound}
plots the empirical mean $\mathbb{E}[\|x_k\|]$ over 300 Monte Carlo
trials against the certificate envelope
$c\rho^k\|x_0\|+\gamma\sqrt{\mathrm{tr}(\Sigma_w)}$
over $T=200$ steps.

\begin{figure}[t]
  \centering
  \includegraphics[width=\columnwidth]{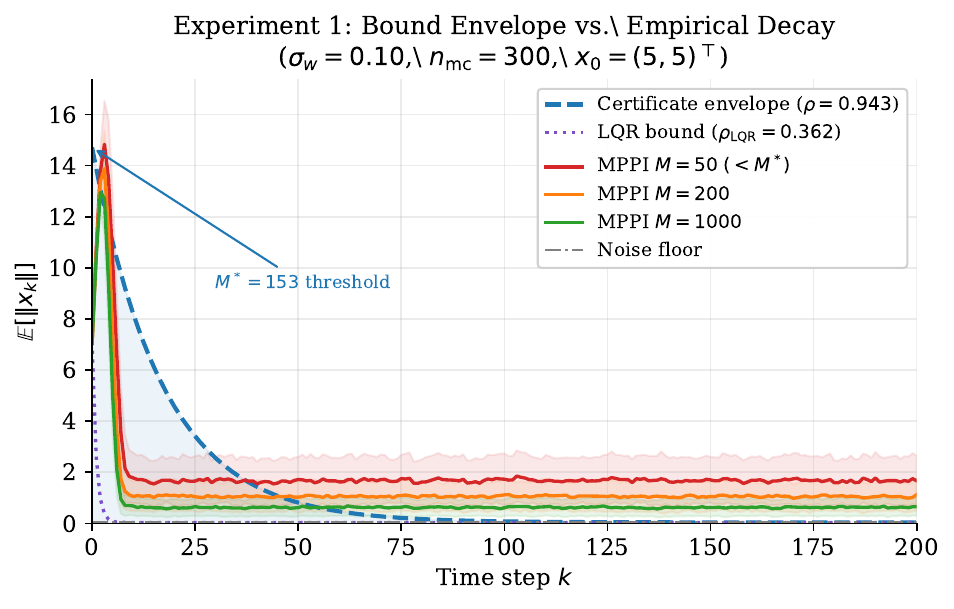}
  \caption{Experiment~1: Empirical $\mathbb{E}[\|x_k\|]$ for
$M\in\{50,200,1000\}$ compared with the nominal certificate decay term
and the LQR reference decay. Shaded bands show $\pm1\sigma$ across
300 Monte Carlo trials. Certified trajectories ($M\ge M^*=153$)
remain bounded and decay toward a practical steady-state level. The
displayed envelope is qualitative because the explicitly calibrated
finite-sample MPPI approximation floor is not included in the plotted
curve.}
  \label{fig:bound}
\end{figure}

The cases $M=200$ and $M=1000$ satisfy $M\ge M^*=153$ and fall within
the analytical certificate; $M=50<M^*$ is an uncertified comparison.
The plotted envelope includes the nominal exponential decay term and the
process-noise floor, but not an explicitly calibrated finite-sample MPPI
approximation floor. Therefore Fig.~\ref{fig:bound} should be interpreted
as a qualitative comparison of decay behavior rather than as a direct
numerical verification of the full bound in
Theorem~\ref{thm:stability}. The empirical trajectories decay rapidly
and remain bounded for the certified sample counts, while their
steady-state levels are dominated by finite-$M$ approximation effects
and process noise. This behavior is consistent with the theorem, whose
full residual contains both the process-noise floor and the MPPI
approximation floor.

\subsection{Experiment 2: Empirical Decay Rate vs.\ Sample Count}

For each $M\in\{10,20,50,100,200,500,1000,5000,10^4\}$, we estimate
an empirical decay factor $\hat\rho(M)$ from 300 noise-free closed-loop
trajectories using the median Lyapunov ratio $V(x_{k+1})/V(x_k)$
over the initial transient.
The median is preferred over log-linear fitting on mean norms because
the latter hits the floating-point noise floor before the transient
fully decays at fast spectral radii.
Results are shown in Fig.~\ref{fig:decay}.

\begin{figure}[t]
  \centering
  \includegraphics[width=\columnwidth]{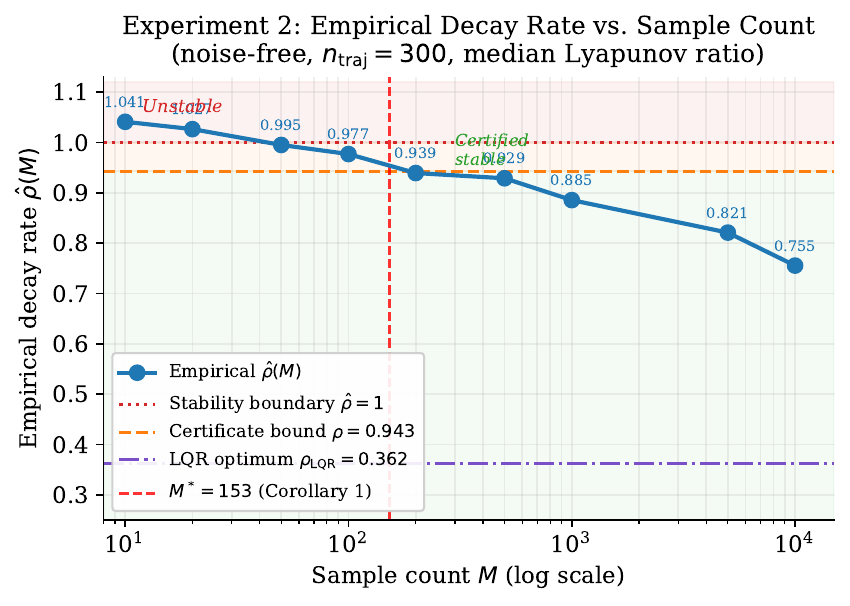}
  \caption{Experiment~2: Empirical decay rate $\hat\rho(M)$ vs.\
    sample count on a log scale.  Shaded regions mark the unstable
    zone ($\hat\rho>1$, red), the uncertified-but-stable zone
    ($\rho<\hat\rho\le1$, orange), and the certified zone
    ($\hat\rho\le\rho=0.943$, green).  The vertical dashed line marks
    $M^*=153$.  Annotated values are $\hat\rho$ at each tested $M$.}
  \label{fig:decay}
\end{figure}

At $M=10$ and $M=20$ the system is unstable ($\hat\rho>1$); at $M=50$
it is marginally stable ($\hat\rho=0.995$) but uncertified.
At $M=200$, the first sweep point satisfying $M\ge M^*$,
$\hat\rho=0.939\le\rho=0.943$, confirming the certificate.
For all $M\ge M^*$ in the sweep, $\hat\rho\le\rho$, with values
$\{0.939,0.929,0.886,0.821,0.755\}$ at
$M\in\{200,500,1000,5000,10^4\}$.
The rate improves monotonically, approaching but not reaching
$\rho_{\mathrm{LQR}}=0.362$ because the finite-temperature
Gibbs-weighted update is not the exact deterministic LQR law.

\subsection{Experiment 3: Phase Portrait}

Fig.~\ref{fig:phase} plots 30 sample trajectories in the
$(x_1,x_2)$ plane for $M=50$ (uncertified) and $M=500$ (certified)
at $\sigma_w=0.1$ over $T=35$ steps, with the LQR $2\sigma$
steady-state ellipse (axes $0.14\times0.29$) overlaid.

\begin{figure}[t]
  \centering
  \includegraphics[width=\columnwidth]{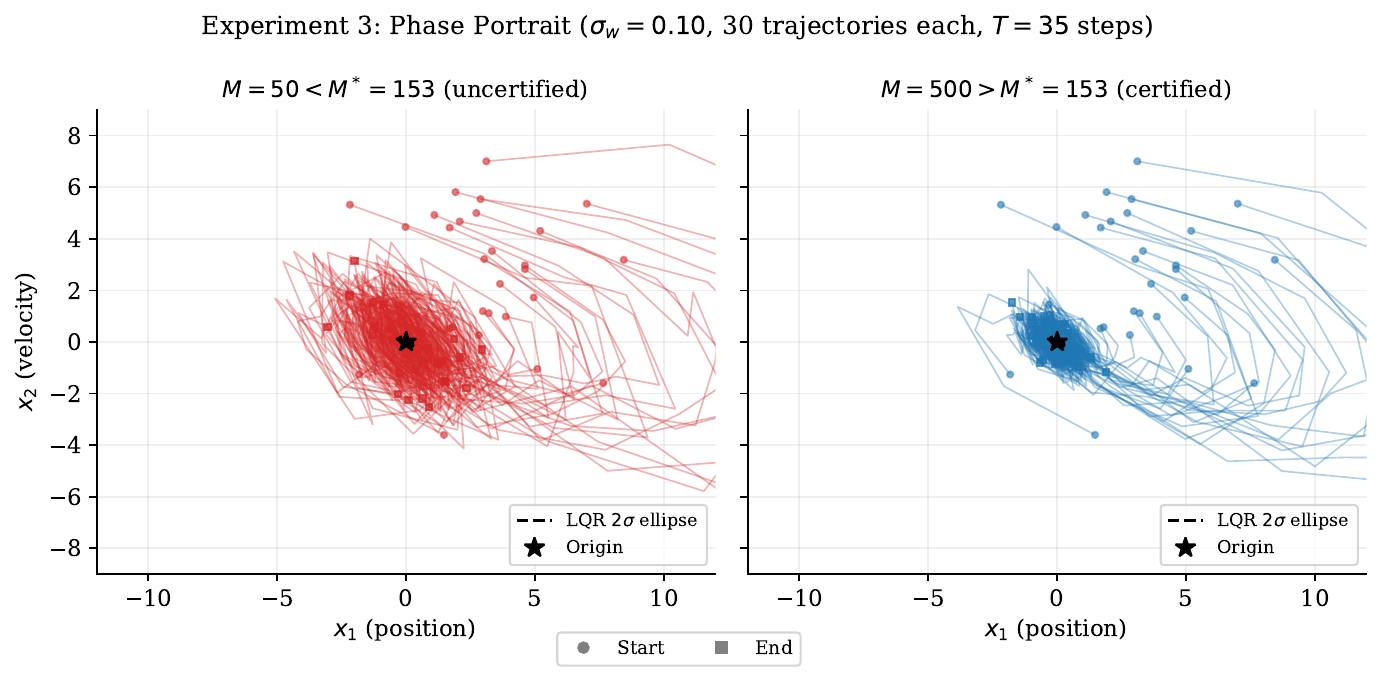}
  \caption{Experiment~3: Phase portraits for $M=50$ (left, red,
    uncertified) and $M=500$ (right, blue, certified).
    Circles mark trajectory starts; squares mark ends.
    The dashed ellipse is the LQR $2\sigma$ steady-state set.
    Certified trajectories concentrate visibly closer to the origin
    (mean terminal norm $0.92$ vs.\ $1.65$); no divergence occurs in
    either case.}
  \label{fig:phase}
\end{figure}

Trajectories under $M=500$ concentrate near the origin with mean
terminal norm $0.92$, while those under $M=50$ remain more dispersed
with mean terminal norm $1.65$.  No divergence ($\|x_T\|>5$) occurs
in either case, confirming that below-certificate behavior is
uncertified, not necessarily unstable.

\subsection{Experiment 4: Analytical vs.\ Empirical $M^*$}

Table~\ref{tab:mstar} and Fig.~\ref{fig:mstar} compare the analytical
threshold $M^*=153$ from Corollary~\ref{cor:mstar} with an empirical
threshold $\hat M^*$, defined as the smallest tested sample count for
which the noise-free decay diagnostic satisfies $\hat\rho(M)<0.99$.
This criterion is empirical and should not be confused with the
sufficient condition in the theorem.

\begin{table}[h]
\centering
\caption{Analytical vs.\ empirical sample thresholds.}
\label{tab:mstar}
\renewcommand{\arraystretch}{1.2}
\begin{tabular}{cccc}
\toprule
$\sigma_w$ & Analytical $M^*$ & Empirical $\hat{M}^*$ & Ratio \\
\midrule
$0.05$ & $153$ & $30$ & $5\times$ \\
$0.10$ & $153$ & $30$ & $5\times$ \\
$0.20$ & $153$ & $30$ & $5\times$ \\
\bottomrule
\end{tabular}
\end{table}

\begin{figure}[t]
  \centering
  \includegraphics[width=\columnwidth]{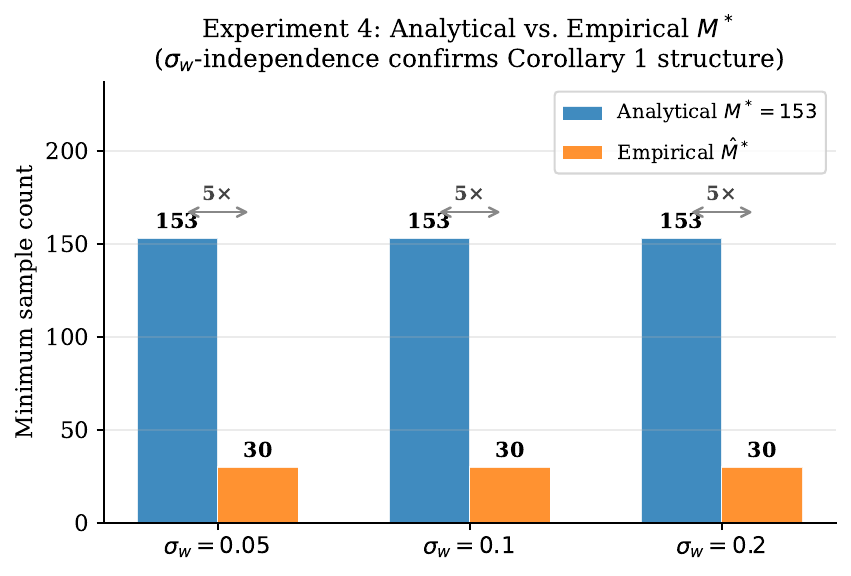}
  \caption{Experiment~4: Analytical $M^*=153$ (blue) vs.\ empirical
    $\hat{M}^*=30$ (orange) across all three noise levels.
    The $5\times$ ratio and $\sigma_w$-independence are consistent
    with Corollary~\ref{cor:mstar}: the sample threshold depends on
    the control-approximation quality, not directly on the noise
    level.}
  \label{fig:mstar}
\end{figure}

The empirical threshold $\hat M^*=30$ is approximately $5\times$
smaller than the analytical threshold and is independent of $\sigma_w$
across all tested noise levels.
The $\sigma_w$-independence is consistent with the structure of
Corollary~\ref{cor:mstar}: the sufficient sample count controls whether
the MPPI approximation error is absorbed by the nominal Lyapunov decay,
a condition that depends on the control-approximation quality rather
than directly on the process noise, which affects only the residual
steady-state level.

The $5\times$ gap between $M^*$ and $\hat M^*$ reflects the expected
conservatism of a worst-case Lyapunov certificate relative to an
average-case Monte Carlo diagnostic.  Worst-case Young's inequality
steps in the proof consume half the Lyapunov margin as slack, and the
concentration constant $C_1=0.22$ is calibrated conservatively for the
maximum-over-$k$ guarantee rather than the typical single-step
behavior.  This level of conservatism is standard for Lyapunov-based
stochastic stability certificates.

\subsection{Experiment 5: ESS as a Diagnostic}

For $M=500$ and $\sigma_w=0.1$, Fig.~\ref{fig:ess} tracks the online
effective sample size
\[
  \mathrm{ESS}_k
  =
  \frac{\bigl(\sum_j w_k^{(j)}\bigr)^2}{\sum_j \bigl(w_k^{(j)}\bigr)^2}
\]
alongside $\|x_k\|$ over $T=200$ steps.

\begin{figure}[t]
  \centering
  \includegraphics[width=\columnwidth]{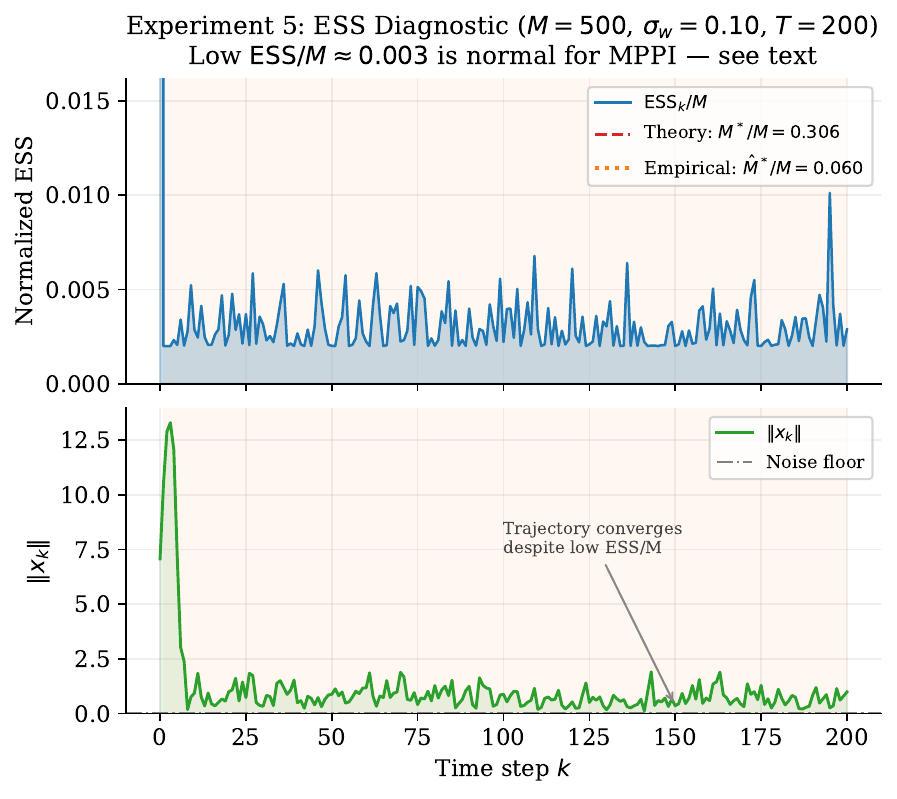}
  \caption{Experiment~5: Normalized $\mathrm{ESS}_k/M$ (top) and
    $\|x_k\|$ (bottom) over $T=200$ steps at $M=500$.
    Orange shading marks steps where $\mathrm{ESS}/M$ falls below the
    empirical threshold $\hat{M}^*/M=0.060$.
    Despite $\mathrm{ESS}/M\approx0.003$ throughout, the trajectory
    converges stably, confirming that low ESS is normal for MPPI and
    is not a stability indicator.}
  \label{fig:ess}
\end{figure}

The mean normalized ESS is $\mathrm{ESS}_k/M\approx0.003$.
This low value is normal for MPPI: importance weights intentionally
concentrate on low-cost trajectories, so $\mathrm{ESS}/M$ of order
$10^{-3}$--$10^{-2}$ is typical in practice.
The more informative signal is the \emph{relative} variation of ESS
over time: transient drops in ESS correlate with high-cost regions or
poor alignment of the sampling distribution with locally useful control
directions, not with instability per se.

The theoretical threshold $M^*/M=153/500=0.306$ is never met in this
run, yet the trajectory converges stably.
This confirms that $M^*$ is a sufficient certificate threshold, not a
necessary condition, and ESS is a qualitative sampling-quality
diagnostic rather than a binary stability indicator.

\subsection{Discussion of Results}

The experiments support three conclusions.

\textbf{Sufficiency, not necessity.}
The analytical threshold $M^*$ is a sufficient certificate, not a
sharp phase-transition boundary.
For $M\ge M^*$, the certified decay bound $\hat\rho\le\rho=0.943$
holds in all tested cases.
For $M<M^*$, the theorem makes no stability claim; empirical
convergence for $M$ as small as $30$ simply indicates that the
certificate is conservative.

\textbf{Expected conservatism of the decay bound.}
The certificate gives $\rho=0.943$, approximately $2.6\times$ looser
than $\rho_{\mathrm{LQR}}=0.362$, a standard consequence of
worst-case norm inequalities in Lyapunov-based certificates for
sampling-based controllers.

\textbf{Conservatism of the sample threshold.}
The analytical threshold $M^*=153$ is approximately $5\times$ larger
than the empirically sufficient $\hat M^*=30$.
This gap is not a failure of the theorem but an inherent feature of
worst-case analysis: the certificate must hold for all LTI systems
satisfying the given parameters, whereas the diagnostic measures only
the typical behavior of this specific benchmark.
Tighter constants---via improved concentration inequalities or
problem-specific calibration of $C_1$---could close this gap at the
cost of a less broadly applicable bound.

Overall, the simulation study validates the qualitative message of
Theorem~\ref{thm:stability}: sufficiently many MPPI samples produce
certified LQR-like closed-loop stability, and the explicit threshold
$M^*$ is conservative but computable and practically informative.

% -----------------------------------------------------------------------
\section{Conclusion}\label{sec:conclusion}
% -----------------------------------------------------------------------

This paper established a finite-sample closed-loop stability certificate
for MPPI applied to discrete-time LTI systems with quadratic costs and
additive Gaussian process noise.  The analysis exploits a special
structure of the unconstrained LTI/quadratic setting: with the DARE
terminal cost, the exact finite-horizon MPC optimizer has the same first
control action as the infinite-horizon LQR law for every planning
horizon.  Hence, for a fixed horizon $N$, MPPI can be analyzed as a
finite-sample stochastic perturbation of the LQR feedback.

The main result shows that, when the MPPI sample count is sufficiently
large, the closed loop satisfies a practical exponential stability bound
of the form
\[
\begin{aligned}
  \E[\norm{x_k}\mathbf{1}_{\{\tau_R>T\}}] &\le
  c\rho^k\norm{x_0}
  +
  \gamma_w\sqrt{\tr(\Sigma_w)} \\
  & \quad + 
  \gamma_M e_M(\eta)
  +
  \gamma_\eta\sqrt{\eta},
  \quad 0\le k\le T.  
\end{aligned}
\]
The three residual terms correspond to process noise, finite-sample and
finite-temperature MPPI approximation error, and the confidence loss from
the per-step sampling failure probability.  The sufficient sample
threshold is explicit and computable from the DARE solution, the LQR
stability margin, the input matrix, the MPPI sampling parameters, and the
selected planning horizon.

The proof combines a high-probability finite-sample MPPI approximation
bound with a Lyapunov perturbation argument.  Once the MPPI approximation
error is below the Lyapunov stability margin, the nominal LQR decrease
absorbs the sampling-induced perturbation, while the additive Gaussian
disturbance produces a residual noise floor.  The result therefore
connects sampling-based MPPI with classical MPC stability theory and
provides a first LTI/quadratic setting in which finite-sample MPPI admits
an explicit closed-loop stability certificate.

% -----------------------------------------------------------------------
\section{Future Work}\label{sec:future}
% -----------------------------------------------------------------------

Several extensions remain open.  First, the present result is stated for
the unconstrained LTI/quadratic case.  Extending the theory to constrained
MPC would require replacing the LQR baseline with a stabilizing
constrained MPC law, together with recursive feasibility, terminal-set,
and constraint-satisfaction assumptions.  Because Gaussian process noise
can violate hard constraints with nonzero probability, constrained
stochastic MPPI also requires a separate treatment of safety, feasibility,
and chance or tube-based constraint satisfaction.

Second, the certificate is horizon-parametrized.  The exact deterministic
first action is independent of the planning horizon because of the DARE
terminal cost, but the MPPI approximation constants generally depend on
the stacked horizon-$N$ cost matrices, sampling covariance, and warm-start
set.  Obtaining uniform-in-horizon sample-complexity bounds would require
additional structural estimates on these horizon-dependent quantities.

Third, this paper treats the LTI foundation case.  Companion work extends
the stability framework to nonlinear systems using contraction-theoretic
and CLF-based arguments, and to adaptive noise covariance estimation,
where closed-loop data are used to improve the estimate of $\Sigma_w$ and
tighten the residual noise-floor term.

% -----------------------------------------------------------------------
\bibliographystyle{IEEEtran}
\bibliography{references}

\end{document}